\title{Bargmann-Fock Extension From Singular Hypersurfaces}
\author{Vamsi Pritham Pingali}
\email{vpingali@math.jhu.edu}
\author{Dror Varolin$^{\dagger}$} 
\address{Krieger 412, Dept of Mathematics, Johns Hopkins University, Baltimore, MD 21210 USA}
\thanks{$\dagger$ Partially supported by the NSF}
\email{dror@math.sunysb.edu}
\address{Dept of Mathematics, Stony Brook University, Stony Brook, NY 11794-3651 USA}
\newcommand{\noi}{\noindent}
\newcommand{\ch}{{\mathcal H}}
\newcommand{\co}{{\mathcal O}}
\newcommand{\cR}{{\mathcal R}}
\newcommand{\sC}{{\mathscr C}}
\newcommand{\se}{{\mathscr E}}
\newcommand{\sh}{{\mathscr H}}
\newcommand{\si}{{\mathscr I}}
\newcommand{\sr}{{\mathscr R}}
\newcommand{\fH}{{\mathfrak H}}
\newcommand{\vp}{\varphi} 
\newcommand{\ve}{\varepsilon}
\newcommand{\B}{{\mathbb B}}
\newcommand{\C}{{\mathbb C}}
\newcommand{\R}{{\mathbb R}}
\newcommand{\Z}{{\mathbb Z}}
\newcommand{\vol}{\operatorname{Vol}}
\newcommand{\dist}{\operatorname{dist}}
\newcommand{\red}{\hfill $\diamond$}
\newcommand{\di}{\partial}
\newcommand{\dbar}{\bar \partial}
\newcommand{\re}{{\rm Re\ }}
\newcommand{\ii}{\sqrt{-1}}
\newcommand{\emb}{\hookrightarrow}
\def\Xint#1{\mathchoice 
{\XXint\displaystyle\textstyle{#1}}
{\XXint\textstyle\scriptstyle{#1}} 
{\XXint\scriptstyle\scriptscriptstyle{#1}}
{\XXint\scriptscriptstyle\scriptscriptstyle{#1}}
\!\int} 
\def\XXint#1#2#3{{\setbox0=\hbox{$#1{#2#3}{\int}$} 
\vcenter{\hbox{$#2#3$}}\kern-.5\wd0}} 
\def\dashint{\Xint-} 
\begin{document}

\theoremstyle{plain}
\newtheorem{thm}{\sc Theorem}
\newtheorem*{s-thm}{\sc Theorem}
\newtheorem{lem}{\sc Lemma}[section]
\newtheorem{d-thm}[lem]{\sc Theorem}
\newtheorem{prop}[lem]{\sc Proposition}
\newtheorem{cor}[lem]{\sc Corollary}

\theoremstyle{definition}
\newtheorem{conj}[lem]{\sc Conjecture}
\newtheorem{prob}[lem]{\sc Open Problem}
\newtheorem{defn}[lem]{\sc Definition}
\newtheorem{qn}[lem]{\sc Question}
\newtheorem{ex}[lem]{\sc Example}
\newtheorem{rmk}[lem]{\sc Remark}
\newtheorem{rmks}[lem]{\sc Remarks}
\newtheorem*{ack}{\sc Acknowledgment}




\begin{abstract}
We establish sufficient conditions for extension of weighted-$L^2$ holomorphic functions from a possibly singular hypersurface $W$ to the ambient space $\C ^n$.  The $L^2$-norms we use are the so-called generalized Bargmann-Fock norms, and thus there are restrictions on the singularities of $W$ as well as the density of $W$.  Our sufficient conditions are that $W$ has density less than $1$ and is uniformly flat in a sense that extends to singular varieties the notion of uniform flatness introduced in \cite{osv}.  We present an example of Ohsawa showing that uniform flatness is not necessary for extension in the singular case,  and find an example showing that, for rather different reasons, uniform flatness is also not necessary in the smooth case.  The latter answers in the negative a question posed in \cite{osv}.  
\end{abstract}

\maketitle

\setcounter{tocdepth}1



\section*{Introduction}

In this article we consider the problem of extending, from an analytic hypersurface $W$ in $\C ^n$, holomorphic functions that are square-integrable with respect to some ambient weight, in such a way that the extension is also square-integrable with respect to the same weight.  When the hypersurface is smooth and uniformly flat, the result we present here was proved in \cite{osv}:  If $W$ is uniformly flat and the density of $W$ is less than $1$ then extension is possible.  (See Sections \ref{flat-section} and \ref{density-section} for the definition of uniform flatness  and of density respectively.)  On the other hand, if $W$ is singular then in general extension is not possible.  The precise--- by which we mean necessary and sufficient--- conditions for such extension on a possibly singular hypersurface are not even conjectured.  

Below we extend the notion of uniformly flat hypersurface to a possibly singular hypersurface.  The notion places strong restrictions on the singularities of the hypersurface, among other things.  We show that the results of \cite{osv} extend to the case of uniformly flat singular varieties. 

To state our results precisely, we introduce some notation.  Let $\omega := \tfrac{\ii}{2} \di \dbar |z|^2$ denote the K\"ahler form associated to the Euclidean metric.  To a smooth function $\vp : \C ^n \to \R$ we associate the Hilbert space 
\[
\sh (\C ^n,\vp) := \co (\C ^n ) \cap L^2 (e^{-\vp}) = \left \{ f \in \co (\C ^n) \ ;\ \int _{\C ^n} |f|^2 e^{-\vp} \omega ^n < +\infty\right \}.
\]
Let $W$ be a possibly singular complex analytic hypersurface.  To $\vp$ and $W$ we associate the Hilbert space 
\[
\fH (W,\vp) := \left \{ f \in \co (W)\ ;\ \int _{W_{\rm reg}} |f|^2 e^{-\vp} \omega ^{n-1} < +\infty \right \}.
\]
The different letters $\sh$ and $\fH$ stress that in the latter case, the weight $\vp$ is defined on the entire ambient space $\C ^n$ that contains $W$.  We emphasize that, by definition, a function is holomorphic on $W$ if each point $x \in W$ has a neighborhood $U$ in $\C ^n$ and a holomorphic function $\tilde f \in \co (U)$ such that $\tilde f |_W = f$.  Since $W \subset \C ^n$ is a closed analytic subset, we may even take $U= \C ^n$.

Let us denote by $\sr _W : \sh (\C ^n,\vp) \to \fH(W,\vp)$ the map that sends $F\in \sh (\C ^n,\vp)$ to its restriction to $W$.  In general, this restriction map is not bounded.  For example, when $n=1$, $\sr _W$ is bounded if and only if $W$ is a finite union of uniformly separated sequences.  But we will not discuss the boundedness of $\sr _W$ in this article; our main concern is with the surjectivity of $\sr _W$.

In section \ref{flat-section} we define the notion of uniformly flat complex analytic hypersurface $W$ and in Section \ref{density-section} the upper density $D^+_{\vp}(W)$ of a hypersurface with respect to a weight $\vp$.  The notions of uniform flatness and upper density were defined for smooth hypersurfaces in \cite{osv}.   Here we introduce modifications of both definitions to the case of possibly singular varieties.

We can now state our first main result.

\begin{thm}\label{suff-analytic}
Let $\vp:\C ^n \to \R$ be a $\sC ^2$-smooth function satisfying 
\begin{equation}\label{gbf-weight}
\ve \omega \le \ii \di \dbar \vp \le C\omega
\end{equation}
for some positive constants $\ve$ and $C$, and let $W \subset \C ^n$ be a possibly singular, uniformly flat complex hypersurface such that $D^+_{\vp}(W) < 1$.  Then $\sr _W : \sh (\C ^n,\vp) \to \fH (W,\vp)$ is surjective.
\end{thm}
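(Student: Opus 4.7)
The plan is to adapt the Ohsawa--Takegoshi $L^2$ extension method, as employed for smooth uniformly flat $W$ in \cite{osv}, to the singular setting.  Given $f \in \fH(W,\vp)$, the strategy is to first build a smooth, almost-holomorphic extension $\tilde f$ to $\C^n$ supported in a tube around $W$, and then correct it to a genuine holomorphic extension $F = \tilde f - u$ by solving a $\dbar$-equation $\dbar u = \dbar \tilde f$ with singular weights that force $u|_{W_{\rm reg}} = 0$.

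For the smooth extension step, uniform flatness of $W$ (in the singular sense introduced in Section \ref{flat-section}) should furnish a covering of $W$ by charts $U_\alpha$ of uniformly bounded geometry in which $W$ admits a holomorphic defining function $g_\alpha$ with controlled derivatives, together with a tube $T_\delta(W) = \{z : \dist(z,W) < \delta\}$ of uniform radius $\delta$.  On each $U_\alpha$ one extends $f|_{U_\alpha \cap W}$ to a holomorphic $\tilde f_\alpha$ (via the standard extension of holomorphic functions from an analytic subset), then patches using a partition of unity subordinate to $\{U_\alpha\}$, multiplied by a cutoff $\chi(|g|^2/\delta^2)$ that vanishes outside $T_\delta(W)$.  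The resulting $\tilde f$ agrees with $f$ on $W_{\rm reg}$, and $\dbar \tilde f$ is supported in an annular region where $|g|\sim \delta$, with $L^2$-norm controlled by $\|f\|_{\fH(W,\vp)}$ via a trace-type estimate on the tube.

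For the $\dbar$-correction, following the twisted Ohsawa--Takegoshi philosophy, the key is to solve $\dbar u = \dbar \tilde f$ against a weight of the form
\[
\psi = \vp + \log|T|^2 + \lambda\bigl(\log(|T|^2 + \delta^2)\bigr),
\]
where $T$ is a global holomorphic function on $\C^n$ whose zero divisor is $W$ (with multiplicities matching the analytic structure of $W$) and $\lambda$ is a convex regularizing function.  The term $\log|T|^2$ is what forces $u$ to vanish on $W$ in the precise way needed so that $F := \tilde f - u$ is a bona fide extension.  Here the density hypothesis $D^+_\vp(W) < 1$ enters critically: by the definition of upper density in Section \ref{density-section}, it is exactly what one needs to ensure that $\ii \di \dbar (\vp - \log|T|^2 + \text{correction}) \ge \ve' \omega$ on the complement of $W$, so that Hörmander's $L^2$ estimate produces a solution $u$ with $\int_{\C^n} |u|^2 e^{-\psi}\omega^n$ bounded by a controlled multiple of $\|f\|_{\fH(W,\vp)}^2$.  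Finiteness of this integral, because of the $-\log|T|^2$ piece, forces $u$ to vanish on $W_{\rm reg}$ to appropriate order, delivering $F \in \sh(\C^n,\vp)$ with $F|_W = f$.

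The principal obstacle, and the novelty over \cite{osv}, is handling the singularities of $W$.  At a singular point the naive cutoff from the first step has much larger $\dbar$-error than in the smooth case, because the tube $T_\delta(W)$ is no longer a smooth regular neighborhood there; moreover the current $[W]$ of integration acquires multiplicities that must be tracked consistently with the weight $\log|T|^2$.  The singular version of uniform flatness is designed precisely to control both phenomena simultaneously: it supplies uniform local models near singular points in which $T$, the geometry of $T_\delta(W)$, and the surface measure on $W_{\rm reg}$ are all comparable at scale $\delta$.  Converting this local control into a global estimate, and in particular verifying that the density condition remains sharp across the singular locus (so that the Hörmander estimate on the twisted weight goes through uniformly), is where the bulk of the technical work will lie.
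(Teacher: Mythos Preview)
Your two-stage architecture and your use of the density hypothesis in the H\"ormander step are both in line with the paper. The gap is at the singular points, and it is exactly the obstacle the paper isolates. A global twisted Ohsawa--Takegoshi scheme with weight $\psi = \vp + \log|T|^2 + \cdots$ produces an extension whose norm is controlled by $\int_{W_{\rm reg}} |f|^2 e^{-\vp}|dT|^{-2}e^{\eta}\,\omega^{n-1}$, not by $\int_{W_{\rm reg}}|f|^2 e^{-\vp}\omega^{n-1}$; this is Theorem \ref{ot-thm}, and the paper records its singular consequence as Theorem \ref{suff-anal-sing}, which extends from the space $\ch(W,\vp)$ rather than from $\fH(W,\vp)$. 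For smooth uniformly flat $W$ the two norms are equivalent because $|\di\rho_r|$ is bounded below on $W$ (Lemma \ref{flat-lem}). At a singular point $dT$ vanishes---for a crossing $T=T_1T_2$ one has $|dT|^2\sim |T_1|^2+|T_2|^2$---so the $\ch$-norm is finite only if $f$ vanishes there, which you do not assume. Your ``trace-type estimate on the tube'' is precisely the step that breaks; the paper says so explicitly in the remark following Lemma \ref{flat-lem}. (Separately, your local extensions $\tilde f_\alpha$ via ``the standard extension'' carry no $L^2$ bound at all, so there is nothing for a trace estimate to act on.)

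The paper's remedy is to make the local extensions already holomorphic \emph{with uniform $L^2$ bounds} (Theorem \ref{local-extensions-with-bounds}) and then do a Cousin-I patching. On a ball where $W$ has branches $W_1,\ldots,W_k$, one extends $f|_{W_1}$ by the smooth Ohsawa--Takegoshi theorem, then inductively extends the difference $f_j - F^{j-1}|_{W_j}$ from $W_j$; this difference vanishes on $W_1\cup\cdots\cup W_{j-1}$ and is hence divisible by $T_1\cdots T_{j-1}$ (Lemma \ref{vanish=divide}). The key new estimate is Lemma \ref{denom-clear}, a Cauchy-integral argument bounding $\int_{W_j}|g|^2/|T_1\cdots T_{j-1}|^2$ by $\int_{W_j}|g|^2$ whenever $g/(T_1\cdots T_{j-1})$ is holomorphic on $W_j$; this lets one apply Theorem \ref{ot-thm} with the extra factor $|T_1\cdots T_{j-1}|^2$ in the weight, so that the resulting extension automatically vanishes on the earlier branches. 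After that the global step is routine: the differences $F_\alpha-F_\beta$ vanish on $W$, one solves $\dbar u = H$ against $\psi=\vp_r+s_r$, and the density hypothesis $D^+_\vp(W)<1$ gives $\ii\di\dbar\psi\ge\delta'\omega$. Your proposal correctly locates the difficulty near $W_{\rm sing}$ but is missing this branch-by-branch mechanism and the estimate of Lemma \ref{denom-clear} that drives it.
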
 

In fact we will show that if $W$ is {\it smooth} and uniformly flat then condition \eqref{gbf-weight} can be dropped completely.  We therefore conjecture that the same is true for Theorem \ref{suff-analytic}.

The method used to prove Theorem \ref{suff-analytic} has two parts, the second of which bears similarity to the work in \cite{osv}.  The first part, which concerns local extension with estimates near the singularities, therefore constitutes one of the main contributions of this paper.  

The second main contribution of the present paper is to show that the conditions of Theorem \ref{suff-analytic} are not necessary; especially, uniform flatness need not hold in general.  We show this by example.  In the case of singular $W$, we present an example told to us by Ohsawa.  In the smooth case, we find a new example that took us rather by surprise when we first discovered it.  The example is the content of Theorem \ref{non-necess-smooth} below.

The examples we find suggest that the kinds of separation conditions we expect to constrain extension are ``in the large", rather than local.  The exact condition, which must reduce to the necessary condition of uniformly separated sequence in the case where $W$ is zero-dimensional, remains undiscovered so far as the authors know.

\begin{ack}
Thanks to Bo Berndtsson, Laszlo Lempert, Jeff McNeal, Takeo Ohsawa, Quim Ortega, Stas Ostrovsky and Andrew Young for many stimulating discussions.  The second author about the first example in Section \ref{non-necess} from Takeo Ohsawa at Oberwolfach in April 2009.  He is grateful to Professor Ohsawa for telling him this example, and to the MFO for providing a stimulating environment. 
\end{ack}

\section{Weighted mean-value inequalities}

In what follows, we will repeatedly use the following result.

\begin{lem}\label{quimbo-trick}\cite{quimbo,lindholm}
Let $\vp$ be a plurisubharmonic function on the unit ball $\B_k$ in $\C ^k$ such that $\ii \di \dbar \vp \le M\omega$ for some $M>0$.  Then there exist a positive constant $K$, depending only on M, and a holomorphic function on $G \in \co (\tfrac{1}{2}\B_k )$ such that $G(0)=0$ and
\[
\sup _{\B _k \left (0,1/2\right )} \left |\vp - \vp (0) - 2\re \ G \right | \le K.
\]
Moreover, if $\vp$ depends smoothly on a parameter, then so does $G$.
\end{lem}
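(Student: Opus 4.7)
The plan is to write $\vp - \vp(0)$ as twice the real part of an explicit (non-holomorphic) function on $\B_k$, and then to correct by a bounded amount so as to turn it into a genuine holomorphic $G$. After reducing to the case $\vp \in \sC^2$ by regularization (the construction will be uniform in the mollification parameter), I would set
\[
F(z) := \int_0^1 \sum_{j=1}^k z_j\, \vp_{z_j}(tz)\, dt, \qquad z \in \B_k.
\]
Since $\vp$ is real, $\vp_{\bar z_j} = \overline{\vp_{z_j}}$, so $\tfrac{d}{dt}\vp(tz) = 2\operatorname{Re} \sum_j z_j \vp_{z_j}(tz)$; integrating in $t$ along the segment $[0,z]$ yields $\vp(z) - \vp(0) = 2\operatorname{Re} F(z)$ and $F(0) = 0$.

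Next I would bound $\dbar F$ pointwise. The combined hypotheses $\ii \di \dbar \vp \ge 0$ and $\ii \di \dbar \vp \le M\omega$ make $[\vp_{j\bar k}]$ positive semidefinite with diagonal entries $\vp_{j\bar j} \in [0, CM]$; the Cauchy-Schwarz inequality for positive semidefinite matrices, $|\vp_{j\bar k}|^2 \le \vp_{j\bar j} \vp_{k\bar k}$, then forces $|\vp_{j\bar k}| \le CM$ pointwise. Differentiating under the integral,
\[
\frac{\di F}{\di \bar z_l}(z) = \sum_{j=1}^k z_j \int_0^1 t\, \vp_{z_j \bar z_l}(tz)\, dt,
\]
so $\dbar F$ is bounded on $\B_k$ in modulus by $C'(k) M$.

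I would then solve $\dbar u = -\dbar F$ on $\B_k$ with the interior sup-norm bound $\|u\|_{L^\infty(\tfrac12 \B_k)} \le C''(k) M$, using either the explicit Bochner--Martinelli--Koppelman integral kernel on the unit ball or H\"ormander's $L^2$-estimate followed by interior $\dbar$-regularity on a shrunk ball. Replacing $u$ by $u - u(0)$ preserves $\dbar u$ and enforces $u(0) = 0$, at the cost of at most doubling the sup-norm bound. Setting $G := F + u$ then gives $\dbar G = 0$ on $\B_k$, in particular $G \in \co(\tfrac12 \B_k)$, together with $G(0) = 0$, and
\[
\vp - \vp(0) - 2\operatorname{Re} G = -2\operatorname{Re} u
\]
is uniformly bounded on $\tfrac12 \B_k$ by $K := 4 C''(k) M$. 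Smooth dependence of $G$ on a parameter is inherited, since the formula defining $F$ and the integral operator producing $u$ both preserve smoothness in parameters.

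The main technical obstacle is the $L^\infty$-bound for the $\dbar$-correction, with a constant depending only on $M$ (the dimension $k$ being fixed ambient data). The BMK formula on the unit ball gives this directly; any $L^2$-route must upgrade to $L^\infty$ on a slightly smaller ball via interior estimates and mean-value inequalities, which is routine but needs care so that the final constant retains the correct dependence on $M$ alone.
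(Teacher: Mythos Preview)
Your argument is correct. The paper does not supply its own proof of this lemma; it is quoted from \cite{quimbo,lindholm}, so there is no in-paper proof to compare against line by line. That said, your route is close in spirit to the standard one in those references. The usual presentation is potential-theoretic: one produces a bounded function $p$ on a slightly smaller ball with $\ii\di\dbar p = \ii\di\dbar\vp$ (using that the complex Hessian of $\vp$ has all entries bounded by a constant times $M$, exactly as you observe via the Cauchy--Schwarz inequality for the positive semidefinite matrix $[\vp_{j\bar k}]$), so that $\vp - p$ is pluriharmonic and hence equals $2\Re G$ up to a constant. Your construction is a concrete realization of the same idea: your $F$ is an explicit primitive with $\vp-\vp(0)=2\Re F$, and solving $\dbar u=-\dbar F$ with sup-norm control is precisely building the bounded potential $p=-2\Re u$.

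Two small remarks. First, the phrase ``Bochner--Martinelli--Koppelman kernel'' is slightly loose: on a domain with boundary the BMK formula carries a boundary term, so what you really want is a Henkin-type solution operator on the ball, which does give the $L^\infty\!\to\!L^\infty$ bound you need. Alternatively, your H\"ormander-plus-interior-regularity route works cleanly if you argue as follows: take the $L^2$-minimal solution $u$ on $\B_k$, then on each small ball $B\subset\tfrac12\B_k$ solve $\dbar v=-\dbar F$ via the Henkin kernel with $\|v\|_{L^\infty(B)}\lesssim M$; the difference $u-v$ is holomorphic on $B$ with controlled $L^2$ norm, hence pointwise bounded by the mean-value property. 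Second, for smooth parameter dependence you should commit to a single linear solution operator (the Henkin integral or the $L^2$-minimal solution $\dbar^*N$), since both are explicitly linear in the data and therefore inherit smoothness in parameters; mixing the two would obscure this.
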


Lemma \ref{quimbo-trick} shows that for each $z \in \C ^n$ and $r >0$ there is a function $G_z \in \co (B(z,r))$ such that 
\[
G_z(w) = \vp (w) - \vp (z) + \psi _z(w)
\]
where $\psi_z(w)$ is bounded on $B(z,r)$ and $G_z(z) = \psi _z(z) = 0$.  By averaging over $B(z,r)$ we see that, with 
\[
\vp _r (z) = \dashint _{B(z,r)} \vp (\zeta)\omega ^n (\zeta),
\]
we have the estimate
\[
\left | \vp (z) - \vp _r (z) \right | \le C_r.
\]
It follows that $\sh (\C ^n,\vp) = \sh (\C ^n, \vp _r)$ and $\fH(W,\vp) = \fH(W,\vp_r)$ in the sense that the identity map is a bounded vector space isomorphism.  We will use this uniform comparison between $\vp$ and $\vp_r$ constantly and often without mention.

We will also need some uniform and $\sC^1$ estimate for weighted-$L^2$ holomorphic functions in $\C ^n$.  These estimates, the first of which often also goes by the name {\it weighted Bergman inequality}, might be thought of as weighted analogues of the Cauchy estimates for a function and its derivative.

\begin{lem}\label{quimbo-cauchy-est}\cite{quimseep, lindholm}
Let $\psi$ be a function satisfying 
\[
-C\omega \le \ii \di \dbar \psi \le C\omega.
\]
Then there is a constant $K > 0$ such that for any $F \in \sh (\C ^n, \psi)$, 
\begin{eqnarray}
\label{norm} \sup _{\C ^n} |F|^2e^{-\psi} &\le& K\int _{\C ^n} |F|^2 e^{-\psi} \omega ^n\\
\nonumber \text{and} && \\
\label{d-of-norm} \sup _{\C ^n} |d(|F|e^{-\tfrac{1}{2} \psi})^r| &\le& K \left ( \int _{\C ^n} |F|^2 e^{-\psi} \omega ^n \right )^{r/2}.
\end{eqnarray}
\end{lem}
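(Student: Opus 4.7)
The plan is to reduce both estimates to classical unweighted results --- the sub-mean-value inequality and the Cauchy estimate --- applied to a local holomorphic modification of $F$ produced via Lemma \ref{quimbo-trick}.

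Fix $z \in \C ^n$. The hypothesis $\ii\di\dbar\psi \ge -C\omega$ means that $\tilde\psi(w) := \psi(w) + C|w-z|^2$ is plurisubharmonic on $B(z,1)$ with complex Hessian bounded by $2C\omega$. Applying Lemma \ref{quimbo-trick} to $\tilde\psi$ produces a holomorphic $G_z \in \co(B(z,1/2))$ with $G_z(z)=0$ and
\[
|\psi(w) - \psi(z) - 2\Re G_z(w)| \le K'
\]
on $B(z,1/2)$ (the contribution $C|w-z|^2$ being bounded and absorbed into $K'$). Set $H := F e^{-G_z}$, which is holomorphic on $B(z,1/2)$, satisfies $|H(z)|=|F(z)|$, and obeys the two-sided comparison $|H(w)|^2 \asymp |F(w)|^2 e^{\psi(z)-\psi(w)}$ with constants depending only on $C$.

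To prove \eqref{norm}, I would apply the sub-mean-value inequality to the plurisubharmonic function $|H|^2$:
\[
|F(z)|^2 = |H(z)|^2 \le c_n \int_{B(z,1/2)} |H|^2\,\omega^n \lesssim e^{\psi(z)} \int_{\C ^n} |F|^2 e^{-\psi}\omega^n,
\]
and dividing through by $e^{\psi(z)}$ gives \eqref{norm} with a constant depending only on $C$.

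For the derivative estimate \eqref{d-of-norm}, I would apply the standard Cauchy estimate to the holomorphic function $H$ on the smaller ball $B(z,1/4)$, yielding $|dH(z)| \le c'_n \sup_{B(z,1/4)}|H|$, and hence $|dH(z)|^2 \lesssim e^{\psi(z)} \|F\|^2_{\sh(\C ^n,\psi)}$ by the pointwise sup-bound just established. To translate $|dH|$ back to the weighted derivative of $F$, I would need control on $d\psi$, which the hypothesis does not provide directly. The main obstacle is precisely this: a bounded complex Hessian does not imply bounded gradient a priori. I would resolve it by using the bounded holomorphic function $G_z$ on $B(z,1/2)$ --- whose real part has bounded gradient on $B(z,1/4)$ by the holomorphic Cauchy estimate, since $G_z(z)=0$ and $|G_z|$ is bounded --- together with an interior elliptic regularity argument applied to $\psi - 2\Re G_z$, whose Laplacian inherits the $L^\infty$ bound on the complex Hessian of $\psi$ (note that $\Re G_z$ is pluriharmonic). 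Finally, a smoothing argument (e.g.\ replacing $|F|$ by $(|F|^2+\ve^2)^{1/2}$ and letting $\ve \to 0$, or raising to an even power) handles the non-smoothness of $|F|$ at the zeros of $F$ and yields the estimate in the form stated.
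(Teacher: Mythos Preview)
Your approach is essentially the one the paper sketches: reduce to unweighted estimates via the holomorphic correction $G_z$ from Lemma~\ref{quimbo-trick}, and for \eqref{d-of-norm} obtain $\sC^1$-control on the error $\psi_z = \psi - \psi(z) - 2\Re G_z$.  The paper does not give a full proof but refers to the literature, remarking that the needed $\sC^1$-bound on $\psi_z$ can be extracted by revisiting Lindholm's construction; your route via interior elliptic regularity (\,$\psi_z$ bounded with bounded Laplacian on $B(z,1/2)$, hence bounded gradient on $B(z,1/4)$\,) is a clean and legitimate alternative.

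There is, however, one slip.  You assert that $G_z$ is bounded on $B(z,1/2)$ and then apply Cauchy estimates to bound $d\Re G_z$.  This is not justified: Lemma~\ref{quimbo-trick} bounds only $\psi_z$, not $G_z$ itself, and since the hypothesis on $\psi$ controls only its complex Hessian, $\psi$ may carry an arbitrary pluriharmonic part that $G_z$ must absorb.  (For instance, if $\psi = 2\Re P$ for an entire $P$, then $\ii\di\dbar\psi = 0$ while $G_z = P - P(z)$, whose modulus and gradient are unbounded uniformly in $z$.)  Fortunately the claim is also unnecessary.  Writing
\[
|F|e^{-\psi/2} \;=\; e^{-\psi(z)/2}\,|H|\,e^{-\psi_z/2},
\]
one sees that the $d\Re G_z$ contributions cancel, and the derivative at $z$ involves only $dH(z)$ (controlled by the Cauchy estimate for $H$) and $d\psi_z(z)$ (controlled by your elliptic regularity step).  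So drop the boundedness assertion for $G_z$, organize the computation through $\psi_z$ alone, and the remainder of your argument goes through.
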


\noi Strictly speaking, (2) was probably not explicitly proved in the literature when $n \ge 2$, but a slight modification of the proof of Lemma \ref{quimbo-trick} in \cite{lindholm} can be used to obtain $\sC ^1$-estimates for the function $\psi _z(w)$ above, and thus generalize the proof of \cite{quimseep} to higher dimensions.  For details, the reader can see, for example, \cite[Lemma 2.1]{sv-toeplitz}

\section{Uniform flatness}\label{flat-section}

We extend to possibly singular hypersurfaces the notion of uniform flatness introduced for smooth hypersurfaces in \cite{osv}.

\begin{defn}
\begin{enumerate}
\item[(i)] For a subset $A \subset \C ^n$ and a positive number $\ve$, we define
\[
U_{\ve} (A) := \{ x\in \C ^n\ ;\ {\rm dist}(x,A) = \inf _{a\in A} |x-a| < \ve \}
\]
\item[(ii)] Let $Y \subset \C^n$ be a smooth complex hypersurface with boundary.  If $\ve : Y \to (0,\infty)$ is a continuous function, the union
\[
N_{\ve}(Y):= \bigcup _{y\in Y}\left \{y+t \tfrac{df(y)}{|df(y)|}\ ;\ t\in \C \text{ and } |t|< \ve (y), (f)=\co _{Y,y}\right \}
\]
is said to be a tubular neighborhood of $Y$ if it is diffeomorphic to a neighborhood of the zero section in the normal bundle of $Y$.
\red
\end{enumerate}
\end{defn}

\noi In the rest of the paper, the function $\ve$ in the definition of $N_{\ve}(Y)$ will always be constant.

Our goal in this section is to extend to certain singular varieties the notion of uniform flatness introduced in \cite{osv} for smooth hypersurfaces in $\C ^n$.  To motivate our definition, let us recall the notion of uniformly flat smooth hypersurfaces.

\begin{defn}[Uniform flatness.  Smooth case\cite{osv}]
A smooth hypersurface $W \subset \C ^n$ is said to be uniformly flat if there exists a positive constant $\ve _o$ such that $U_{\ve_o}(W) = N_{\ve _o}(W)$.
\red
\end{defn}

We take this opportunity to remind the reader of the following proposition describing the basic properties of uniformly flat smooth hypersurfaces.  We remind the reader of the notation 
\[
D_W(w,\ve_o) := T_{W,w} \cap B(w,\ve _o) \oplus \{v\in T_{\C ^n,w}\ ; \di f(w) v = 1 \text{ and } |v| <\ve _o \}.
\]

\begin{prop}\cite[Proposition 3.2]{osv}\label{osv-uf}
Let $W \subset \C ^n$ be a uniformly flat hypersurface and let $\ve _o$ be a constant such that $U_{\ve_o}(W) = N_{\ve _o}(W)$.  Then the following hold.
\begin{enumerate}
\item[(G)] Assume $n \ge 2$.  Then for all $w \in W$, $W \cap D_W(w,\ve_o)$ is given as a graph $y = f(x)$, over $T_{W,w}\cap B(w,\ve _o)$, of a function $f : T_{W,w} \cap B(w,\ve_o) \to \C$ satisfying 
\begin{equation}\label{quad-graph}
|f(w+x)| \le \frac{|x|^2}{\ve_o}.
\end{equation}
Here $D_W(w,\ve_o)$ denotes the union of the disks with centers on $T_{W,w}\cap B(w,\ve _o)$ and radius $\ve _o$ that are orthogonal to $T_{W,w} \subset \C^n$.
\item[(A)]  For each $R > 0$ there exists a constant $C_R>0$ such that for all $z \in \C ^n$
\[
{\rm Area}(W \cap B(z,R)) \le C_R.
\]
\end{enumerate}
\end{prop}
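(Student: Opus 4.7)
The plan is to derive both (G) and (A) from a single geometric consequence of the tubular-neighborhood hypothesis, which I will call the \emph{tangent-ball property}: for every $w \in W$ and every unit vector $\nu \in T_{\C^n,w}$ orthogonal to $T_{W,w}$, the open ball $B(w+\ve_o\nu,\ve_o)$ is disjoint from $W$. This is immediate from $U_{\ve_o}(W)=N_{\ve_o}(W)$, since any point $p \in W \cap B(w+\ve_o\nu,\ve_o)$ other than $w$ would, together with $w$, produce two distinct preimages of the center $w+\ve_o\nu$ under the normal-exponential map $(y,v)\mapsto y+v$, contradicting that this map is a diffeomorphism.

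To prove (G), I would fix $w \in W$ and choose unitary coordinates with $w=0$ and $T_{W,w}=\{z_n=0\}$, writing points of $\C^n$ as $(x,\zeta)$ with $x \in \C^{n-1}$ and $\zeta \in \C$. For any $(x,\zeta) \in W$ with $|x|,|\zeta|<\ve_o$, applying the tangent-ball property with $\nu = e^{i\arg\zeta}e_n$ (any direction if $\zeta=0$) forces
\[
|x|^2 + (|\zeta|-\ve_o)^2 \ge \ve_o^2,
\]
so either $|\zeta| \le \ve_o - \sqrt{\ve_o^2-|x|^2}$ or $|\zeta| \ge \ve_o + \sqrt{\ve_o^2-|x|^2}$. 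Near $w$ the smaller branch is selected by continuity, and the holomorphic implicit function theorem yields a local graph $\zeta=f(x)$. I would then propagate this graph across the whole disk $T_{W,w} \cap B(w,\ve_o)$ by a connectedness argument: the forbidden middle band from the two-sided bound prevents $f$ from jumping branches, while any second sheet through some $(x_0,\zeta_0)$ would produce two distinct points of $W$ in a single normal disk of $D_W(w,\ve_o)$, again contradicting injectivity of the normal-exponential map on the embedded tube. The desired quadratic estimate $|f(x)| \le |x|^2/\ve_o$ is then the elementary algebraic fact that $\ve_o - \sqrt{\ve_o^2 - |x|^2} \le |x|^2/\ve_o$ for $|x| < \ve_o$.

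For (A), I would set $r := \ve_o/2$ and compare volumes. On one hand, $N_r(W \cap B(z,R)) \subset B(z,R+r)$, so it has Euclidean volume at most $c_n(R+r)^{2n}$. On the other hand, the quadratic bound from (G) gives uniform control on the second fundamental form of $W$, so the Jacobian of $(y,v)\mapsto y+v$ is bounded below by a constant depending only on $\ve_o$ on normal fibers of radius $r \le \ve_o/2$. Since this map is a diffeomorphism onto the tube by hypothesis, its image has volume at least $c\,r^2\,\text{Area}(W \cap B(z,R))$, and combining the two estimates yields (A) with $C_R$ depending only on $R$, $\ve_o$ and $n$.

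The step I expect to be the most delicate is the global graph-and-single-sheet assertion in (G): extending $f$ across the whole of $T_{W,w} \cap B(w,\ve_o)$ and ruling out a second sheet whose tangent plane may be tilted relative to $T_{W,w}$ is where the full strength of the tubular-neighborhood hypothesis, rather than just the pointwise tangent-ball property, really enters.
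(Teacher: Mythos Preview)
This proposition is not proved in the present paper; it is quoted from \cite[Proposition~3.2]{osv} and merely recalled for later use, so there is no in-paper argument to compare your attempt against. I will comment on the proposal on its own merits.

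Your overall strategy is the natural one. The tangent-ball property holds, though your one-line justification needs a small repair: the center $w+\ve_o\nu$ lies on the boundary of the normal $\ve_o$-disk at $w$, not in its interior, so argue instead that for $0<r<\ve_o$ the nearest point of $W$ to $w+r\nu$ is the foot of a normal of length $<\ve_o$ and hence, by injectivity, equals $w$; then let $r\uparrow\ve_o$. With that in hand, your derivation of $|\zeta|\le\ve_o-\sqrt{\ve_o^2-|x|^2}\le|x|^2/\ve_o$ for any $(x,\zeta)\in W\cap D_W(w,\ve_o)$ is clean, and your proof of (A) by comparing the volume of the embedded tube with that of the containing ball is correct.

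The genuine gap is exactly where you flagged it. Your single-sheetedness argument claims that two points $(x_0,\zeta_1),(x_0,\zeta_2)\in W$ in the same vertical disk of $D_W(w,\ve_o)$ contradict injectivity of the normal exponential map. They do not: those vertical disks are orthogonal to $T_{W,w}$, not to $T_{W,p}$ for $p\neq w$, so no point of that vertical segment need lie on any normal fiber of $W$, and no violation of injectivity follows. The tangent-ball property at $w$ alone gives the height bound but does not exclude several sheets all obeying it, nor does it by itself ensure that the local graph through $w$ extends across all of $\{|x|<\ve_o\}$. One clean way to close the gap exploits the holomorphicity of $W$: the height bound already makes the vertical projection $\pi:W\cap D_W(w,\ve_o)\to\{|x|<\ve_o\}$ a \emph{proper} holomorphic map, so by Remmert's theorem every component of the source surjects onto the ball; since $\pi^{-1}(0)=\{w\}$ there is only one component, $\pi$ is a finite branched covering, and its degree is $1$ because $d\pi_w$ is an isomorphism. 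This bypasses any need to verify transversality of $T_{W,p}$ to $e_n$ at each point.
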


\noi In extending the notion of uniform flatness to the singular setting, we aim to achieve the following:
\begin{enumerate}
\item[(i)]  Away from the singular locus of $W$, the notion of uniform flatness should be the same as for general smooth varieties in $\C ^n$, and 
\item[(ii)] near the singular locus, the hypersurface should look like a finite number of uniformly flat smooth hypersurfaces intersecting pairwise-transversely, and the transversality should be uniform.  The local notion of uniform flatness will be the graph property (G). 
\end{enumerate}
With these comments in mind, we propose the following definition.

\begin{defn}[Uniform flatness]\label{u-flat-defn}
A singular analytic variety $W$ is said to be uniformly flat if there are numbers $\ve _o>0$ and $a>1$ with the following properties.
\begin{enumerate}
\item[(R)] The set $N_{\ve_o} (W- U_{a \ve_o}(W_{\rm{sing}}))$ is a tubular neighborhood of the smooth hypersurface with boundary $W- U_{a \ve_o}(W_{\rm sing})$ in $\C ^n$.
\item[(S)] For each $p \in W_{\rm sing}$ the set $W \cap B(p,\ve _o)$ is a union of smooth hypersurfaces $W_1,...,W_{N_p}$ each of which is given as the graph of a function on its tangent space with the property \eqref{quad-graph}.  Moreover,  
\begin{enumerate}
\item[(S$_N$)] $N_p \le \ve _o^{-1}$ for all $p$, and
\item[(S$_A$)] the angle at $p$ between any two of the $W_i$ lies in $[\ve _o, \pi - \ve _o]$.
\red
\end{enumerate}
\end{enumerate}
\end{defn}

\begin{rmk}
Note that $N_p \le n$ when $W$ has only simply normal crossing singularities.
\red
\end{rmk}

\begin{rmk}
At one point, we had hoped to use the following definition of uniform flatness:  a singular analytic variety $W$ was to be uniformly flat if there are positive numbers $\ve _o$ and $a$ such that the following holds.  For any $0<\ve <\ve _o$, the set 
\[
N_{\ve} (W- U_{a \ve}(W_{\rm{sing}})) 
\]
is a tubular neighborhood of the smooth hypersurface with boundary $W- U_{a \ve}(W_{\rm sing})$ in $\C ^n$.

So far, we have been unable to prove that this notion of uniform flatness is the same as the one we have taken above.  The more natural, but possibly less general, definition is the one made in this remark.  It would be nice to decide whether the two definitions are the same.  
\red
\end{rmk}

\noi The following are two important consequences of uniform flatness that we will use in the sequel.  These properties follow easily from Proposition \ref{osv-uf} and the definition of uniform flatness.

\begin{lem}\label{flat-properties} 
Let $W \subset \C ^n$ be a uniformly flat hypersurface, and let $\ve _o$ and $a$  be as in the definition of uniform flatness.  Then the following hold.
\begin{enumerate}
\item[(G)]  Assume $n \ge 2$.  Let $B_{W,w}(\ve _o):= T_{W,w} \cap B(w,\ve _o)$.  Then for all $w \in W-U_{(a+1)\ve_o}(W_{\rm sing})$, $W\cap N_{\ve_o}(B_{W,w}(\ve _o))$ is given as a graph over $B_{W,w}(\ve _o)$ of a function $f : B_{W,w}(\ve_o) \to \C$ satisfying
\[
|f(w+x)| \le \frac{|x|^2}{\ve _o}.
\]

\item[(A)] For each $R > 0$ there is a constant $C_R
>0$ such that for all $z \in \C ^n$,
\[
{\rm Area}(W \cap B(z,R)) \le C_R.
\]
\end{enumerate}
\end{lem}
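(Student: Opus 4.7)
The plan is to reduce both parts of the lemma to their smooth counterparts in Proposition \ref{osv-uf} by separating the two regimes of Definition \ref{u-flat-defn}: the tubular-neighborhood regime (R) away from $W_{\rm sing}$ and the graph-decomposition regime (S) near it.

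For (G), the hypothesis $w\in W-U_{(a+1)\ve_o}(W_{\rm sing})$ forces $\dist(w,W_{\rm sing})>(a+1)\ve_o$, so by the triangle inequality every point of $W\cap B(w,\ve_o)$ lies at distance greater than $a\ve_o$ from $W_{\rm sing}$ and hence in the smooth hypersurface with boundary $\tilde W:=W-U_{a\ve_o}(W_{\rm sing})$. By (R), $N_{\ve_o}(\tilde W)$ is a tubular neighborhood of $\tilde W$, so a full $\ve_o$-neighborhood of $w$ in $\tilde W$ satisfies the smooth uniform flatness hypothesis of Proposition \ref{osv-uf}. Since the proof in \cite{osv} of Proposition \ref{osv-uf}(G) is purely local---it uses only the tubular-neighborhood structure in the ball of radius $\ve_o$ about the given point---it applies verbatim to $\tilde W$ at $w$ and yields the graph $f:B_{W,w}(\ve_o)\to\C$ with the quadratic bound $|f(w+x)|\le|x|^2/\ve_o$.

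For (A), decompose
\[
W\cap B(z,R)=\bigl(\tilde W\cap B(z,R)\bigr)\;\cup\;\bigl(W\cap U_{a\ve_o}(W_{\rm sing})\cap B(z,R)\bigr).
\]
On $\tilde W\cap B(z,R)$, the tube-volume identity, valid because $N_{\ve_o}(\tilde W)$ is a tubular neighborhood, gives
\[
\pi\ve_o^{2}\,\area(\tilde W\cap B(z,R))\;\le\;\vol\bigl(N_{\ve_o}(\tilde W\cap B(z,R))\bigr)\;\le\;\vol(B(z,R+\ve_o)),
\]
so $\area(\tilde W\cap B(z,R))\le C_1 R^{2n}$. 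For the second piece, take a maximal $\ve_o/2$-separated subset $\{p_i\}_{i\in I}\subset W_{\rm sing}\cap B(z,R+a\ve_o)$; a standard volume packing bound gives $|I|\le C_2(R/\ve_o)^{2n}$. By (S) together with (S$_N$), inside each $B(p_i,\ve_o)$ the set $W$ is a union of at most $\ve_o^{-1}$ smooth graphs each with the quadratic deviation \eqref{quad-graph}, so
\[
\area\bigl(W\cap B(p_i,\ve_o)\bigr)\;\le\;N_{p_i}\cdot c_n\ve_o^{2(n-1)}\;\le\;c_n\ve_o^{2n-3}.
\]
Summing over $i$ accounts for all of $W\cap U_{\ve_o}(W_{\rm sing})\cap B(z,R)$; the remaining points in the intermediate region $U_{a\ve_o}(W_{\rm sing})\setminus U_{\ve_o}(W_{\rm sing})$ are smooth points of $W$, and are handled by a further subcovering to which the tube-volume estimate applies (using that the sheets furnished by (S) extend smoothly beyond $B(p,\ve_o)$ and remain locally graphical). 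Combining the two estimates gives $\area(W\cap B(z,R))\le C_R$.

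The main conceptual step is the reduction in (G): one must verify that the proof in \cite{osv} of the smooth case of Proposition \ref{osv-uf}(G) is genuinely local, so that it applies to a smooth hypersurface with boundary at an interior point at distance at least $\ve_o$ from the boundary. Part (A) is then a routine covering argument combining the tube-volume bound in the smooth regime with the per-ball count supplied by (S); the only bookkeeping that requires care is the intermediate region, absorbed into the smooth-regime estimate by a secondary subcovering.
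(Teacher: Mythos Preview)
Your proposal is correct and matches the paper's approach: the paper gives no detailed proof, stating only that the lemma ``follows easily from Proposition \ref{osv-uf} and the definition of uniform flatness,'' and you have simply spelled out that reduction---using (R) to invoke the local argument of \cite{osv} for (G), and combining the tube-volume bound on $\tilde W$ with the per-ball sheet count from (S) for (A). The only place to be slightly more careful is the intermediate annular region $U_{a\ve_o}(W_{\rm sing})\setminus U_{\ve_o}(W_{\rm sing})$, which you flag and handle by a secondary covering; this is fine, and in any case the paper regards all of this as routine.
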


\section{Density}\label{density-section}

\begin{defn}
Let $T \in \co (\C ^n)$ be a holomorphic function such that $W = T^{-1} (0)$ and $dT$ is nowhere zero on $W_{\rm{reg}}$.  (That is to say, $T$ generates the ideal of functions vanishing on $W$.)  For any $z\in\C^n$ and any $r>0$ consider the (1,1)-form
\[
\Upsilon ^W _r(z) := \frac{1}{2\pi} \sum _{i ,\bar j =1} ^n \left (\dashint _{B(z,r)} \frac{\di ^2\log |T|^2}{\di \zeta ^i \di \bar \zeta ^j} \omega ^n(\zeta) \right ) \ii dz^i \wedge d\bar z ^j.
\]
The $(1,1)$-form $\Upsilon _r^W(z)$ is called the total density tensor of $W$.
\red
\end{defn}

\begin{rmk}
Note that the total density tensor is identical in form to the total density tensor for smooth uniformly flat hypersurfaces introduced in \cite{osv}; the only thing different is that we no longer assume that $W$ is smooth.  As was pointed out then, the definition of $\Upsilon ^W_r (z)$ is independent of the choice of the function $T$ defining $W$.  Moreover, if $[W]$ denotes the current of integration over $W$ then $\Upsilon _r ^W$ is the average of $[W]$ in a ball of center $z$ and radius $r$:
\[
\Upsilon ^W_r  = [W]*\frac{\mathbf{1}_{B(0,r)}}{\vol(B(0,r))},
\]
where $\mathbf{1}_A$ denotes the characteristic function of a set $A$ and $*$ is convolution. 
\red 
\end{rmk}

A useful concept in the study of interpolation and sampling for a smooth hypersurface with respect to strictly plurisubharmonic weights is that of density of the hypersurface.  The definition given in \cite{osv}, which we now recall, extends immediately to the setting of possibly singular hypersurface.

\begin{defn}\label{d-def}
Assume $\vp$ is strictly plurisubharmonic.  The number 
\[
D_r(W;z) := \sup \left \{  \frac{\Upsilon _r ^W (z)(v,v)}{\ii \di \dbar \vp _r (z)(v,v)}\ ;\ v\in T_{\C ^n, z}-\{0\}\right \}
\]
is called the density of $W$ in the ball of radius $r$ and center $z$.  The upper density of $W$ is
\[
D^+_{\vp}(W) := \limsup _{r \to \infty} \sup _{z \in \C ^n} D_r(W;z).
\]
The lower density of $W$ is
\[
D^-_{\vp}(W) := \liminf_{r \to \infty}\inf_{z \in \C ^n} D_r(W;z).
\]
(We will not use $D^-_{\vp}(W)$ in the present article.)
\red
\end{defn}
 
As stated, the upper and lower densities are well-defined only for strictly plurisubharmonic functions.  However, one can reformulate the definition as follows. 
 \[
D^+_{\vp}(W))= \inf \left \{ \alpha  \ge 0 \ ;\  \ii \di \dbar \varphi _r - \tfrac{1}{\alpha}\Upsilon _r^W \ge 0 \text{ for all }r>>0 \right \}
\]
and
\[
D^-_{\vp}(W)= \sup \{ \gamma \ge 0\ ;\ \gamma  \ii \di \dbar \varphi _r(z) - \Upsilon _r^W(z) \not \ge 0 \text{ for all }z \in \C ^n \text{ and all }r>>0\}.
\]
These equivalent formulations of the upper and lower densities make sense for $\vp$ that are plurisubharmonic but not necessarily strictly plurisubharmonic.

Thus for example, $D^+_{\vp}(W) < 1$ if and only if there exists a constant $\delta > 0$ such that for all $r >> 0$,
\[
\ii \di \dbar \vp _r \ge (1+\delta) \Upsilon ^W_r.
\]

\section{Proof of Theorem \ref{suff-analytic}}

Part of our proof of Theorem \ref{suff-analytic} resembles the method of proof used in \cite{osv}.  The twisted Bochner-Kodaira technique used in \cite{fv} cannot be used directly to prove Theorem \ref{suff-analytic} when $W$ is not smooth.  We will illustrate this claim in Paragraph \ref{smooth-case}.

\subsection{A singular function}

As is usual in the $L^2$ approach of extension, we need to produce a function that is singular on $W$.  The function we choose is a tried-and-true one (see \cite{osv,fv}), but we contribute one new insight to the definition.

\begin{defn}\label{sing-fn-defn}
We define the function 
\[
s_r(z) = \log |T(z)|^2 - \dashint _{B(z,r)} \log |T(\zeta)|^2 \frac{\omega ^n}{n!}
(\zeta),
\]
where  $T \in \co (\C ^n)$ be a holomorphic function such that $W = \{ T=0 \}$ and $dT$ is not identically zero on $W$.
\red \end{defn}

\begin{rmk}
Note that the function $s_r$ depends only on $W$, and not on the generator $T$ of the ideal $\si _W$ of germs of holomorphic functions vanishing on $W$.
\red
\end{rmk}

Definition \ref{sing-fn-defn} and the Poincar\'e-Lelong Identity yield the following proposition.
\begin{prop}\label{t-rep}
Let $s_r$ be as in Definition \ref{sing-fn-defn}.  Then 
\[
\frac{\ii}{2\pi} \di \dbar s_r = [W]-[W]*\frac{\mathbf{1} _{B(0,r)}} {\vol(B(0,r))} = [W]-\Upsilon ^W_r.
\]
\end{prop}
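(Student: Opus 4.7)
The plan is to apply the Poincar\'e-Lelong identity term by term to the definition of $s_r$. The first summand contributes $\tfrac{\ii}{2\pi}\di\dbar\log|T|^2 = [W]$ as a $(1,1)$-current directly, matching the first piece on the right-hand side of the claim.

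For the second summand, I would first rewrite the ball average as a convolution. Setting $\rho_r := \mathbf{1}_{B(0,r)}/\vol(B(0,r))$ and noting that $\omega^n/n!$ is Lebesgue measure on $\C^n$, translation invariance of the Euclidean ball gives
\[
\dashint_{B(z,r)}\log|T(\zeta)|^2\,\frac{\omega^n}{n!}(\zeta) \;=\; \bigl(\log|T|^2 * \rho_r\bigr)(z).
\]
The only nontrivial step is then that $\di\dbar$ commutes with convolution against the bounded, compactly supported kernel $\rho_r$, i.e.\
\[
\di\dbar\bigl(\log|T|^2 * \rho_r\bigr) \;=\; \bigl(\di\dbar\log|T|^2\bigr) * \rho_r
\]
as currents. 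This is a standard distributional calculation: test against a smooth compactly supported form $\eta$, use the definition of convolution of a distribution with an $L^1$-function to push $\di\dbar$ onto the smooth convolution $\check\rho_r * \eta$, and then transfer it back onto $u = \log|T|^2$ by integration by parts. Multiplying through by $\tfrac{\ii}{2\pi}$ and applying Poincar\'e-Lelong to the right-hand factor produces $[W]*\rho_r$, which is precisely $\Upsilon^W_r$ by the remark following the definition of the total density tensor.

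Subtracting the two calculations yields $\tfrac{\ii}{2\pi}\di\dbar s_r = [W] - \Upsilon^W_r$, as claimed; the middle equality in the statement is just the recorded identity $\Upsilon^W_r = [W]*\rho_r$. The only conceptual point worth being careful about is the commutation of $\di\dbar$ with convolution against the non-smooth kernel $\rho_r$; everything else is a direct application of Poincar\'e-Lelong and an unpacking of definitions.
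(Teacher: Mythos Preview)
Your proposal is correct and follows exactly the approach the paper indicates: the text immediately before the proposition says it is yielded by ``Definition \ref{sing-fn-defn} and the Poincar\'e-Lelong Identity,'' and the paper then refers to \cite{osv} for the details rather than writing them out. Your argument---Poincar\'e-Lelong applied to $\log|T|^2$, rewriting the ball average as convolution with $\rho_r$, and commuting $\di\dbar$ with convolution---is precisely the intended one.
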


\noi The proof can be found in \cite{osv} where the following lemma was also established.

\begin{lem}\label{s-prop}
The function $s_r$ has the following properties.
\begin{enumerate}
\item[(a)]  It is non-positive.

\item[(b)]  For each $r,\ve > 0$ there is a constant $C_{r,\ve}$ such that if $\dist(z, W) \ge \ve$, then $s_r(z) \ge - C_{r,\ve}$.

\item[(c)]  The function $e^{-s_r}$  is not integrable at any open subset that intersects $W$.
\end{enumerate}
\end{lem}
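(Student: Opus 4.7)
The plan is to dispatch the three parts separately: (a) reduces to the submean inequality for plurisubharmonic functions; (b) combines a Riesz-type representation formula with the uniform area bound of Lemma \ref{flat-properties}(A); (c) is a local computation near a regular point of $W$. Part (a) is immediate: since $\log|T|^2$ is plurisubharmonic on $\C^n$ by Poincar\'e-Lelong, the volume submean-value inequality gives $\log|T(z)|^2 \le \dashint_{B(z,r)} \log|T|^2 \, \omega^n/n!$, so $s_r(z) \le 0$.

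For part (b), first note that if $\dist(z,W) \ge r$ then $B(z,r) \cap W = \emptyset$, so $\log|T|^2$ is pluriharmonic on $B(z,r)$; its volume mean equals its value at the center, and so $s_r(z) = 0$. In the remaining range $\ve \le \dist(z,W) < r$ I would invoke a Riesz-type representation formula. Using the Poincar\'e-Lelong identity $\tfrac{\ii}{2\pi} \di \dbar \log|T|^2 = [W]$ underlying Proposition \ref{t-rep}, one obtains
\[
\dashint_{B(z,r)} \log|T|^2 \, \omega^n/n! \; - \; \log|T(z)|^2 \; = \; \int_{W \cap B(z,r)} K_r(\zeta - z) \, \omega^{n-1}/(n-1)!(\zeta),
\]
where $K_r$ is a positive radial kernel arising from the ball Green's identity (logarithmic in $|\zeta - z|$ when $n=1$, and like $|\zeta - z|^{-(2n-2)}$ when $n \ge 2$). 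Since $|\zeta - z| \ge \ve$ throughout the domain of integration, $K_r$ is bounded by some $K_{r,\ve}$. Combined with the uniform area bound $\area(W \cap B(z,r)) \le C_r$ from Lemma \ref{flat-properties}(A), this gives $s_r(z) \ge -K_{r,\ve}C_r$, as claimed. The main subtlety is establishing the representation formula cleanly when $n \ge 2$; this can be done by averaging the classical spherical Lelong-Jensen identity over radii, exploiting positivity of $\tfrac{\ii}{2\pi} \di \dbar \log|T|^2 \wedge \omega^{n-1}$.

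For part (c), I would work locally near a regular point of $W$. Any open set $U$ meeting $W$ also meets $W_{\rm reg}$, since $W_{\rm reg}$ is dense in $W$; pick $p \in U \cap W_{\rm reg}$ and choose holomorphic coordinates $(w_1,\dots,w_n)$ centered at $p$ so that $W = \{w_1 = 0\}$ locally. The germ of $T$ at $p$ then factors as $T = w_1 g$ for some non-vanishing holomorphic $g$, and for $z$ in a small neighborhood $V \subset U$ of $p$,
\[
\log|T(z)|^2 = 2\log|z_1| + \log|g(z)|^2,
\]
with the second term smooth and bounded on $\overline{V}$. Since $\log|T|^2 \in L^1_{\rm loc}$, the average $z \mapsto \dashint_{B(z,r)}\log|T|^2\,\omega^n/n!$ is continuous in $z$ and hence bounded on $\overline{V}$. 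Therefore $s_r(z) = 2\log|z_1| + O(1)$ on $V$, so $e^{-s_r(z)} \ge c|z_1|^{-2}$ there; since $|z_1|^{-2}$ fails to be integrable in any neighborhood of $\{w_1 = 0\}$ in $\C^n$, the function $e^{-s_r}$ cannot be integrable on $U$.
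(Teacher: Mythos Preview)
The paper does not give its own proof of this lemma; it simply cites \cite{osv}. Your argument is correct and self-contained. Part (a) is exactly the submean-value inequality for the plurisubharmonic function $\log|T|^2$. Your Riesz/Lelong--Jensen approach to (b), combined with the area bound of Lemma~\ref{flat-properties}(A), is the natural route and works cleanly; note that some uniform area bound is genuinely needed here, so your appeal to uniform flatness is not gratuitous---the lemma as stated implicitly carries the standing hypotheses of Theorem~\ref{suff-analytic}. The local computation in (c) near a regular point is correct: density of $W_{\rm reg}$ in $W$, the factorization $T=w_1 g$ with $g$ nonvanishing, continuity of the convolution average, and non-integrability of $|w_1|^{-2}$ are exactly what is required. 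One minor slip: in (c) you introduce coordinates $(w_1,\dots,w_n)$ but then write $z_1$; you mean $w_1$ throughout.
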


\begin{rmk}
The function $s_r$ is the logarithm of the length of a defining section $T$ of the (trivial) line bundle associated to $W$, measured with a metric constructed from $T$, namely $e^{-\psi _T}$, where
\[
\psi _T (z) := \dashint _{B(z,r)} \log |T(\zeta)|^2 \frac{\omega ^n(\zeta)}{n!}.
\]
Note that the curvature of $e^{-\psi _T}$ is $\ii \di \dbar \psi _T = \Upsilon ^W_r$, which depends only on $W$.
\red
\end{rmk}

\begin{rmk}
We have not missed the dependence of $\psi _T$ on the radius $r$.  In our work, this dependence will be irrelevant as soon as $r$ is sufficiently large, but how large an $r$ is needed depends on $W$.  Perhaps this dependence is important in other considerations.
\red
\end{rmk}

\subsection{The smooth case}\label{smooth-case}  

In \cite{v-tak}, the following $L^2$-extension theorem was proved.

\begin{d-thm}\label{ot-thm}
Let $X$ be a Stein manifold with K\"ahler form $\omega$, $Z \subset X$ a smooth hypersurface, $e^{-\eta}$ a singular Hermitian metric for the holomorphic line bundle associated to the smooth divisor $Z$, and $T$ a holomorphic section of this line bundle such that $Z= \{T=0\}$.  Assume that $e^{-\eta}|_Z$ is still a singular Hermitian metric, and that 
\[
\sup _X |T|^2 e^{-\eta} = 1.
\]
Let $H \to X$ be a holomorphic line bundle with singular Hermitian metric $e^{-\kappa}$ whose curvature $\ii \di \dbar \kappa$ is non-negative in the sense of currents.  Suppose also that 
\[
\ii \di \dbar \kappa + {\rm Ricci}(\omega) \ge (1+\delta) \ii \di \dbar \eta
\]
for some positive number $\delta$.  Then for each section $f \in H^0(Z, H)$ satisfying 
\[
\int _{Z} \frac{|f|^2 e^{-\kappa}}{|dT|^2 e^{-\eta}} \frac{\omega ^{n-1}}{(n-1)!} < +\infty
\]
there is is a section $F \in H^0(X,H)$ such that 
\[
F|_Z = f \quad \text{and}\quad \int _{X} |F|^2 e^{-\kappa} \frac{\omega ^n}{n!} \le \frac{C}{\delta} \int _Z  \frac{|f|^2 e^{-\kappa}}{|dT|^2 e^{-\eta}} \frac{\omega ^{n-1}}{(n-1)!},
\]
where the constant $C$ is universal.
\end{d-thm}

\begin{rmk}
Theorem \ref{ot-thm} can be easily extended to the case of singular $Z$ with essentially the same proof, provided that integration over $Z$ is replaced by integration over $Z_{\rm reg}$.  
\red
\end{rmk}

Theorem \ref{ot-thm} looks rather similar to Theorem \ref{suff-analytic}, except for the denominator $|dT|^2e^{-\eta}$ used on the subvariety $Z$.  In fact, let us take $X = \C ^n$, $Z=W$, $\omega = \tfrac{\ii}{2} \di \dbar |z|^2$, $\kappa = \vp$ and 
\[
\eta(z) := \psi _T(z) = \dashint _{B(z,r)} \log |T|^2 \omega ^n.
\]
If we define\footnote{Note that $\rho _r$ is not differentiable at $W$, but $|\di \rho _r|^2$ is well-defined on $W$ and therefore on $\C ^n$.}
\[
\rho _r :=e^{\tfrac{1}{2} s_r}
\]
and
\[
\ch (W,\vp) := \left\{ f \in \co (W)\ ;\ \int _{W_{\rm reg}}\frac{|f|^2e^{-\vp}}{|\di \rho _r|^2} \omega ^{n-1} < +\infty\right \},
\]
then we have the following theorem.

\begin{d-thm}\label{suff-anal-sing}
Let $W$ be a singular hypersurface in $\C ^n$ and $\vp$ a plurisubharmonic function in $\C ^n$.  Suppose $D^+_{\vp}(W) < 1$.  Then the restriction map $\cR _W : \sh (\C ^n,\vp) \to \ch (W,\vp)$ is surjective.
\end{d-thm}

\begin{prob}
Is the converse of Theorem \ref{suff-anal-sing} true under the additional assumption \eqref{gbf-weight}?
\end{prob}

As the next lemma shows, Theorem \ref{suff-analytic} follows from Theorem \ref{ot-thm} when $W$ is smooth (and uniformly flat), and in fact the latter is more general than the former in the case of smooth $W$, since the curvature hypotheses on $\vp$ are weaker.  

\begin{lem}\label{flat-lem}
If $W$ is smooth and uniformly flat then there is a constant $C_r$ such that 
\[
\inf _{x \in W} |\di \rho _r(x)|^2 \ge C_r.
\]
\end{lem}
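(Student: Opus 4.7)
First I would derive the pointwise identity $|\partial \rho_r(z)|^2 = \tfrac{1}{4}|\partial T(z)|^2 e^{-\psi_T(z)}$ for $z \in W$, which reduces the lemma to an upper bound of the form $\psi_T(z) - \log|\partial T(z)|^2 \leq C$ for all $z \in W$, with $C$ depending only on $r$ and the uniform-flatness parameter $\ve_o$. The identity is obtained by computing $\partial \rho_r = \tfrac12 \rho_r ( \partial T / T - \partial \psi_T )$ off $W$ and taking the limit $\zeta \to z \in W$: the first piece has modulus $\tfrac{1}{2}|\partial T| e^{-\psi_T/2}$, while the second vanishes since $\rho_r = 0$ on $W$ and $\psi_T$ is smooth. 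Squaring gives the identity (smoothness of $W$ is used here to ensure $|\partial T| > 0$ on $W$).

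Next I would carry out a local computation on $B(z, \ve_o/\sqrt 2)$. After choosing unitary coordinates $(z', w) \in \C^{n-1} \times \C$ centered at $z$ with $T_{W,z} = \{w = 0\}$, the graph property in Lemma \ref{flat-properties}(G) presents $W \cap B(z, \ve_o/\sqrt 2)$ as the single smooth sheet $\{w = f(z')\}$ with $|f(z')| \leq |z'|^2/\ve_o$. One may then factor $T = g \cdot (w - f(z'))$ on this ball with $g$ nonvanishing holomorphic, giving $|g(z)| = |\partial T(z)|$. Since $\log|g|^2$ is pluriharmonic on the ball, the mean value property yields $\dashint \log|g|^2 = \log|\partial T(z)|^2$, and a direct computation using the substitution $u = w - f(z')$ and the quadratic bound on $f$ reduces $\dashint \log|w - f(z')|^2$ to a constant $A_{\ve_o}$ depending only on $\ve_o$. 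Thus
\[
\dashint_{B(z, \ve_o/\sqrt 2)} \log|T|^2 \frac{\omega^n}{n!} = \log|\partial T(z)|^2 + A_{\ve_o}.
\]

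The main obstacle is promoting this small-ball identity to an upper bound on $\psi_T(z) = \dashint_{B(z, r)} \log|T|^2 \, \omega^n/n!$ in the regime $r > \ve_o/\sqrt 2$ (the case $r \leq \ve_o/\sqrt 2$ is already handled). To this end I would use two further ingredients. First, Lemma \ref{s-prop}(a) gives $\rho_r \leq 1$, equivalently $\log|T|^2 \leq \psi_T$ pointwise. Second, since $i\partial\bar\partial \psi_T = \Upsilon^W_r$ is uniformly bounded by area property (A) of Lemma \ref{flat-properties}, a rescaled application of Lemma \ref{quimbo-trick} yields a holomorphic $G_z \in \co(B(z, r))$ with $G_z(z) = 0$ and $\psi_T(\zeta) \leq \psi_T(z) + 2\re G_z(\zeta) + K_r$ on $B(z, r)$. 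Integrating the resulting estimate $\log|T(\zeta)|^2 \leq \psi_T(z) + 2\re G_z(\zeta) + K_r$ over the annulus $B(z, r) \setminus B(z, \ve_o/\sqrt 2)$, and noting that $\int_{B(z, R)} G_z \, \omega^n/n! = 0$ for each $R \leq r$ by the holomorphic mean value, the $G_z$ contribution cancels. Setting $\alpha = \vol(B_{\ve_o/\sqrt 2})/\vol(B_r)$ and combining the exact identity on $B(z, \ve_o/\sqrt 2)$ with the upper bound on the annulus yields
\[
\psi_T(z) \leq \alpha\bigl(\log|\partial T(z)|^2 + A_{\ve_o}\bigr) + (1 - \alpha)\bigl(\psi_T(z) + K_r\bigr),
\]
which rearranges to $\psi_T(z) - \log|\partial T(z)|^2 \leq A_{\ve_o} + \tfrac{1-\alpha}{\alpha} K_r$, completing the proof.
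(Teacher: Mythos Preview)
Your argument is correct.  Both your proof and the paper's begin from the identity $|\partial\rho_r(z)|^2 = |dT(z)|^2 e^{-\psi_T(z)}$ on $W$ (up to a harmless factor of $\tfrac14$) and then split into a small-ball piece handled via the graph coordinates of Proposition~\ref{osv-uf}(G) and a large-scale piece handled via the boundedness of the curvature $\Upsilon^W$ (Property (A)) together with Lemma~\ref{quimbo-trick}.  The implementations of the large-scale step differ, however.  The paper exploits the freedom to replace $T$ by any local generator of the ideal of $W$ on $B(z,r)$, and constructs a specific one, $T_1$, satisfying $\log|T_1|^2 \le 0$ on $B(z,r+a)$ together with a uniform lower bound on its average over $B(z,a)$; the desired bound then drops out of these two inequalities.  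You instead keep the original $T$, apply the sub-mean-value inequality $\log|T|^2 \le \psi_T$ on the annulus $B(z,r)\setminus B(z,\ve_o/\sqrt2)$, and eliminate the holomorphic correction $G_z$ by its mean-value property over concentric balls---a clean device the paper does not use.  Your route is arguably more direct; the paper's has the minor conceptual advantage of making the $T$-independence of each factor explicit from the outset.

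One small imprecision: in your local step, the substitution $u = w - f(z')$ does not map the ball $B(z,\ve_o/\sqrt2)$ to itself, so the average of $\log|w-f(z')|^2$ is not literally a fixed constant $A_{\ve_o}$; it depends (mildly) on $f$.  However, you only invoke the \emph{upper} bound $\dashint_{B(z,a)}\log|w-f(z')|^2 \le A_{\ve_o}$ in the final rearrangement, and that is immediate from $|w-f(z')| \le |w| + |z'|^2/\ve_o \le 2a$ on the ball---no substitution needed.  The gap is cosmetic.
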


\begin{proof}
We choose a point $z \in W$, and we will show that there is a lower bound for $|\di \rho _r(z)|^2$ that does not depend on $z$.  To this end, let us first fix $T \in \co (W)$ such that $W= \{ T=0\}$ and $dT|_W$ is never zero.  

Now, one can write 
\begin{eqnarray*}
|\di \rho _r(z)|^2 &=& |dT(z)|^2 \exp \left ( - \dashint _{B(z,r)} \log |T|^2 \omega ^n \right ) \\
&=& |dT(z)|^2 \exp \left ( - \dashint _{B(z,a)} \log |T|^2 \omega ^n \right ) \times \exp \left ( \left ( \dashint _{B(z,a)} - \dashint _{B(z,r)} \right ) \log |T|^2 \omega ^n \right ).
\end{eqnarray*}
The two factors in the last line are both independent of the choice of function $T$ that cuts out $W$.  For the first factor, we need only use a function that cuts out $W$ in $B(z,a)$, while for the second factor we may use a function that cuts out $W$ in $B(z,r)$.  We make choices for each factor, so as to obtain universal lower bounds.

Let us begin with the left factor, which takes place over $B(z,a)$.  By using the function $T= y - f(x)$ given by Proposition \ref{osv-uf}(G) representing the graph of $W$ near the point $z$ in question we get a uniform lower bound for the first factor.  To have such a defining function, it suffices by the uniform flatness hypothesis to take $a$ sufficiently small but independent of $z$.

Let us now turn to the second factor.  Of course, this factor is bounded above by $1$, because of the increasing property of (pluri)subharmonic averages, but we are interested in a lower bound.  To obtain the latter, we proceed as follows.  Consider the closed positive $(1,1)$-current  
\[
\Upsilon ^W_a (x) := \frac{\ii}{2\pi} \di \dbar \dashint _{B(0,a)} \log |T(\zeta + x)|^2 \omega ^n(\zeta) = [W]* \frac{\mathbf{1}_{B(0,a)}}{{\rm Vol}(B(0,a))}(x).
\]
The trace of $\Upsilon ^W_a(x)$ is ${\rm Area}(W \cap B(x,a))$, and therefore by Proposition \ref{osv-uf}(A), $\Upsilon ^W_a$ is bounded above by a multiple of the Euclidean metric, the multiple depending only on $a$.  Below we are going to consider balls of the form $B(x,a)$ as we let $x$ vary in $B(z,r)$, and therefore we want to work in $B(z,2r)$ for the moment.  From the proof of Lemma \ref{quimbo-trick} (for example, in \cite{lindholm}) we deduce that for $r > 0$ there is a (plurisubharmonic) function $u = u_{z,a,r}$ such that $\ii \di \dbar u = \Upsilon ^W_a$ in $B(z,2r)$ and  
\[
\sup _{B(z,r+2a)} |u| \le A_{a,r},
\]
where $A_{a,r}$ is independent of $z$.  

Now, the function $h(x) := u(x) - \dashint_{B(x,a)} \log |T|^2 \omega ^n$ is pluriharmonic in $B(z,2r)$, and therefore with $H \in \co (B(z,2r))$ such that $h:= 2\re H$, we have  
\[
u(x) = \dashint _{B(x,a)} \log |Te^{H}|^2 \omega ^n, \quad x\in B(z,r+a).
\]
Letting $T_o := T e^H$, we have 
\[
\left | \dashint _{B(x,a)}\log |T_o|^2 \omega ^n \right | \le A_{a,r}, \qquad x\in B(z,r+a).
\]

By the sub-mean value property, we have 
\[
\log |T_o(x)|^2 \le \dashint _{B(x,a)} \log |T_o|^2 \omega ^n \le A_{a,r},
\]
and thus setting $T_1 := T_o e^{ - \tfrac{1}{2} A_{a,r}}$ we have a function $T_1 \in \co (B(z,r+a))$ such that $\log |T_1(x)|^2 \le 0$ for all $x\in B(z,r+a)$, and therefore  
\[
\dashint _{B(x,r)}- \log |T_1|^2 \ge 0.
\]
Moreover, 
\[
\dashint _{B(z,a)} \log |T_1|^2 \omega ^n \ge - \left | \dashint _{B(z,a)} \log |T_1|^2 \omega ^n \right | \ge - 2A_{a,r}.
\]
The proof is complete.
\end{proof}

\begin{rmk}
Lemma \ref{flat-lem} clearly fails for non-smooth $W$.  It is for this reason that we said we could not use the method of \cite{fv} to prove Theorem \ref{suff-analytic} in the non-smooth case.  Even more, when $W$ is singular it is in fact the case that the spaces $\ch (W,\vp)$ and $\fH(W,\vp)$ are different.  Thus the two results \ref{suff-analytic} and \ref{suff-anal-sing} discuss extension from two completely different spaces of holomorphic functions.
\red
\end{rmk}

Notice that our proof of Theorem \ref{suff-analytic} when $W$ is smooth requires only that the weight $\vp$ be plurisubharmonic, and does not need the stronger positivity hypothesis \eqref{gbf-weight}.  On the other hand, in the proof of Theorem \ref{suff-analytic} for the general case, we will use the strong curvature hypothesis \eqref{gbf-weight}.  We see no reason at the moment why Theorem \ref{suff-analytic} cannot be extended to the case of weights that are only plurisubharmonic, and thus state the following conjecture.

\begin{conj}\label{weak-gbf}
Theorem \ref{suff-analytic} holds for any plurisubharmonic weight $\vp$ such that $D^+_{\vp}(W) <1$.
\end{conj}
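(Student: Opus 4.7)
The plan is to deduce Conjecture \ref{weak-gbf} from Theorem \ref{suff-analytic} by a weight-approximation argument: given plurisubharmonic $\vp$ with $D^+_{\vp}(W)<1$, construct a sequence of weights $\vp_k$ that satisfy \eqref{gbf-weight} and still have density less than one, apply Theorem \ref{suff-analytic} to produce extensions $F_k$ of the given $f\in\fH(W,\vp)$, and extract a weak $L^2$-limit in the original Hilbert space $\sh(\C^n,\vp)$. The regularization proceeds in layers. First, set $\vp_{\eta}:=\vp+\eta|z|^2$; then $\ii\di\dbar\vp_\eta\ge\eta\omega$, and since $\ii\di\dbar(\vp_\eta)_r-\ii\di\dbar\vp_r=\eta\omega$ while $\Upsilon^W_r$ is unaffected, the reformulation of density given in Section \ref{density-section} yields $D^+_{\vp_\eta}(W)\le D^+_\vp(W)<1$. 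Second, convolve with a smooth mollifier to obtain $\vp_{\eta,\rho}:=\vp_\eta*\chi_\rho$, which is $\sC^\infty$ plurisubharmonic, still satisfies $\ii\di\dbar\vp_{\eta,\rho}\ge\eta\omega$, and retains $D^+_{\vp_{\eta,\rho}}(W)<1$ for small $\rho$ because its density tensor depends continuously on $\rho$ at a fixed averaging scale $r$.

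The essential obstacle is the \emph{upper} bound in \eqref{gbf-weight}, which these regularizations do not produce since $\ii\di\dbar\vp$ may be a positive current of unbounded mass. To install it I would exhaust $\C^n$ by balls $B(0,R_j)$ with $R_j\nearrow\infty$ and, using Demailly's regularized maximum, construct smooth plurisubharmonic weights $\tilde\vp_j$ that agree with $\vp_{\eta,\rho}$ on the half-ball $B(0,R_j/2)$ and with a paraboloid $A_j|z|^2+c_j$ outside $B(0,R_j)$, choosing $A_j$ large enough to dominate the (locally bounded) Hessian of $\vp_{\eta,\rho}$ on $B(0,R_j)$ and $c_j$ so that the regularized max reduces to each function on the respective regions. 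The resulting $\tilde\vp_j$ is smooth plurisubharmonic with $\eta\omega\le\ii\di\dbar\tilde\vp_j\le A_j\omega$ everywhere, and the density condition $D^+_{\tilde\vp_j}(W)<1$ holds because the interior region inherits the density of $\vp_{\eta,\rho}$, the exterior region has density bounded by a constant multiple of $1/A_j$ (using the uniform area bound from Lemma \ref{flat-properties}(A) to control $\Upsilon^W_r$ above by a multiple of $\omega$), and the transition annulus has controlled size. Theorem \ref{suff-analytic} then yields extensions $F_j\in\sh(\C^n,\tilde\vp_j)$ of $f$; since $\tilde\vp_j\le\vp+O_j(1)$ on $B(0,R_j/2)$, the interior norms $\int_{B(0,R_j/2)}|F_j|^2e^{-\vp}\omega^n$ are controlled by $\|f\|^2_{\fH(W,\vp)}$ up to a constant depending on $j$.

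A diagonal passage $j\to\infty$, then $\rho\to 0$, then $\eta\to 0$ completes the argument: Banach--Alaoglu on weighted $L^2$ yields a weak limit, Fatou's lemma bounds its norm against the original weight $e^{-\vp}$, and Montel's theorem (applied locally on compacta where the approximating weights are uniformly bounded) shows the limit is holomorphic and restricts to $f$ on $W_{\rm reg}$. The principal technical obstacle is uniform control of the extension constant from Theorem \ref{suff-analytic} applied to $\tilde\vp_j$ as $A_j\to\infty$: one needs that this constant depends only on the lower curvature bound $\eta$ and the density gap $\delta$, not on the upper bound $A_j$. This is the natural expectation in $L^2$-methods, where upper Hessian bounds typically enter only through pointwise Bergman-type inequalities such as Lemma \ref{quimbo-cauchy-est} rather than through the $L^2$-to-$L^2$ extension estimate itself; confirming this would require a careful reading of the proof of Theorem \ref{suff-analytic} to isolate how the constant $C$ of \eqref{gbf-weight} propagates to the final bound, and is likely the most delicate point of the argument.
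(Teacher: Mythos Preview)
This statement is a \emph{conjecture} in the paper, not a theorem; the authors explicitly leave it open, noting only that the smooth case (handled via Theorem~\ref{ot-thm}) needs no upper bound in \eqref{gbf-weight}. There is therefore no proof in the paper to compare your attempt against.

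Your proposal has a genuine gap at exactly the point you flag as delicate. The extension constant in Theorem~\ref{suff-analytic} \emph{does} depend on the upper bound $C$ of \eqref{gbf-weight}, and not only through Bergman-type pointwise inequalities. Tracing the proof: the local extension near singular points rests on Lemma~\ref{denom-clear}, whose argument opens by invoking Lemma~\ref{quimbo-trick} to replace $\vp$ by $0$ on each small ball; the constant $K$ in Lemma~\ref{quimbo-trick} depends on the upper Hessian bound $M$, and this $K$ feeds directly into the constant $\widehat C$ of \eqref{local-est-for-denom-clear}, hence into Theorem~\ref{local-extensions-with-bounds} and the final estimate. Since your weights $\tilde\vp_j$ carry upper bound $A_j\to\infty$, the extension constants blow up and no uniform limit can be extracted. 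There is also a sign error in your comparison step: mollifying a plurisubharmonic function \emph{increases} it, and adding $\eta|z|^2$ increases it further, so on $B(0,R_j/2)$ one has $\tilde\vp_j=\vp_{\eta,\rho}\ge\vp$, not $\le$. That is the right direction to ensure $f\in\fH(W,\tilde\vp_j)$, but the wrong direction to transfer a bound on $\int|F_j|^2e^{-\tilde\vp_j}\omega^n$ back to $\sh(\C^n,\vp)$; and the gap $\vp_{\eta,\rho}-\vp$ is itself not uniformly bounded without precisely the upper curvature hypothesis you are trying to remove.
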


We now turn our attention to the singular case.  As already indicated, we will use the approach, taken in \cite{osv}, of locally extending the data from $W$, and then patching together the local extensions using H\"ormander's Theorem.

\subsection{Local extensions}

Let $f \in \fH (W,\vp)$ be the function to be extended.

\begin{d-thm}\label{local-extensions-with-bounds}
There exists a covering of $W$ by a locally finite collection of balls $\{ B_{\alpha}\}$ each having radius $r \in [\ve _o/2, (a+1)\ve _o]$, with the following additional properties.
\begin{enumerate}
\item There is a number $N$ having the property that each point of $\C ^n$ is contained in at most $N$ balls.  
\item Write $f _{\alpha} = f|_{W \cap B_{\alpha}}$.  Then for each ${\alpha}$, there exists $F_{\alpha} \in \co (B_{\alpha})$ such that $F_{\alpha} |_{W\cap B_{\alpha}} = f_{\alpha}$ and 
\[
\int _{B'_{\alpha}}|F_{\alpha}|^2 e^{-\vp} \omega ^n \le C \int _{W_{\rm reg}\cap B_{\alpha}} |f|^2 e^{-\vp} \omega ^{n-1},
\]
where $B'_{\alpha}$ is a ball with the same center as $B_{\alpha}$ and with radius $\lambda r$ for some $\lambda \in (0,1)$ independent of $\alpha$, and the constant $C$ is independent of $j$.
\end{enumerate}
\end{d-thm}

We now embark on the proof of Theorem \ref{local-extensions-with-bounds}.  We begin by dispensing with the uniformity of the number $N$ of open balls containing any point.  To this end, it is clear that locally such a number $N$ obviously exists, and by uniform flatness the picture locally is the same everywhere.

Next, observing that by the uniform flatness hypothesis we can cover $W$ by open balls $B_{\alpha}$ of a fixed radius (which we may shrink a few times below) such that for each $\alpha$, $W \cap B_{\alpha}$ is a finite union of smooth hypersurfaces cut out by holomorphic functions $T_1,...,T_k$ where $k=k_{\alpha} \le N$ for some positive integer $N$ independent of $\alpha$.  By the definition of uniform flatness, particularly Property (S) of Definition \ref{u-flat-defn}, as well as Property (G) of Lemma \ref{flat-properties}, we may assume, perhaps after decreasing the radius of the balls $B_{\alpha}$ if necessary, that 
\begin{equation}\label{cauchy-est-tj-bounds}
\frac{1}{C} \le |dT_j(z)| \le C, \quad z\in B_{\alpha}
\end{equation}
for some constant $C > 0$ independent of $\alpha$ and $j$.   Indeed, since the $T_j$ are holomorphic, the result follows from a simple application of the Cauchy estimates to the function whose graph is cut out by $T_j$.

Uniform flatness also means that the angles between the unit vectors orthogonal to the $W_i$ at the origin are uniformly bounded away from zero (the uniformity being with respect to ${\alpha}$).  For our purposes, this property takes the following form: there exists a constant $\tilde C$ independent of the center of $B_{\alpha}$, such that for all $1 \le i \neq j \le k$,  
\begin{equation}\label{dt-lb}
 \left | dT_j(v) \right | \ge \tilde C |v|  \quad \text{for all } v\in T^{1,0}_{W_i} -\{0\}.
\end{equation}

We have the following lemma.

\begin{lem}\label{vanish=divide}
Define the variety
\[
W_{\alpha} := \bigcup _{1 \le j \le k} \{T_j=0\} \subset B_{\alpha}.
\]
Suppose $0 \in W_{\alpha}$ is a singular point, and that $F \in \co (B)$ vanishes on $W_{\alpha}$.  Then for any multiindex $I=(i_1,...,i_{j-1})$ such that $1 \le i_{\ell} \le k$, $F$ is divisible by the product $T_{i_1}\cdot ...\cdot  T_{i_{j-1}}$.
\end{lem}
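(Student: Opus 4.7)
The plan is to prove the divisibility by induction on the length $m = j-1$ of the multiindex, with transversality of the smooth components supplying the key geometric input at each step. After relabelling, it suffices to show that if $F$ vanishes on $W_\alpha$ then $F$ is divisible by $T_1 \cdots T_m$ for every $m \le k$ (and for a multiindex with repeats the statement would fail, so one reads it as a choice of an ordered subfamily of the $T_j$).

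For the base case, one uses that $W_1 = \{T_1 = 0\}$ is a smooth hypersurface with $dT_1$ nowhere zero on $W_1$, in view of Property (S) of Definition \ref{u-flat-defn} together with \eqref{cauchy-est-tj-bounds}. The standard fact that the ideal sheaf of a smooth hypersurface is generated by any reduced defining equation then yields $F = T_1 G_1$ with $G_1 \in \co(B_\alpha)$.

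For the inductive step, assume $F = T_1 \cdots T_m \, G_m$ with $G_m \in \co(B_\alpha)$, and suppose we wish to pull out the factor $T_{m+1}$. The key point is that on $W_{m+1}$ each of the functions $T_1, \ldots, T_m$ is not identically zero: by the uniform transversality estimate \eqref{dt-lb} (equivalently, Property (S$_A$)), the intersection $W_{m+1} \cap W_j$ is a proper smooth subvariety of $W_{m+1}$ for each $j \le m$. Thus on the nonempty open subset
\[
W_{m+1} \setminus \left( W_1 \cup \cdots \cup W_m \right)
\]
the product $T_1 \cdots T_m$ is nowhere zero, while $F$ vanishes identically; therefore $G_m$ vanishes on this dense open subset of $W_{m+1}$, and so, by continuity, $G_m \equiv 0$ on $W_{m+1}$. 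Applying the base case again to $G_m$ and the smooth hypersurface $W_{m+1}$ gives $G_m = T_{m+1} G_{m+1}$, completing the induction.

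The only mildly subtle point is the passage from "vanishing on a dense open subset" to "vanishing on all of $W_{m+1}$," which requires that $W_{m+1}$ (as a smooth, connected piece through $0$ in the small ball $B_\alpha$) not be decomposed by the transverse intersections with the other $W_j$; this is immediate from the graph description in Property (G) once $B_\alpha$ is chosen small enough, and is the step that genuinely uses uniform flatness rather than just smoothness of each $W_j$ individually. No Nullstellensatz or deep commutative algebra is needed — only the regular-sequence behaviour of transverse smooth divisors.
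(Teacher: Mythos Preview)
Your proof is correct, but it takes a different route from the paper's own argument. The paper proves in one stroke that $F/(T_1\cdots T_k)$ is holomorphic on all of $B_\alpha$: it observes that this quotient is holomorphic at every smooth point of $W_\alpha$ (where exactly one $T_j$ vanishes), so any poles would have to lie in the locus where at least two of the $T_j$ vanish; by transversality this locus has codimension at least $2$, while the polar set of a meromorphic function is always a divisor, hence must be empty. Your argument instead peels off one factor at a time by induction, using at each step only that a smooth hypersurface's ideal is generated by its defining function and that the transverse intersections $W_{m+1}\cap W_j$ are proper analytic subsets of $W_{m+1}$. Your approach is more elementary in that it avoids invoking the structure of polar sets of meromorphic functions; the paper's approach is shorter and more conceptual, dispatching all factors simultaneously via a codimension count. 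Both rely on the same underlying transversality input from Property~(S).
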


\begin{proof}
It suffices to prove that 
\[
\frac{F}{T_1\cdots T_{k}} \in \co (B_{\alpha}).
\]
Indeed, if $\frac{F}{T_1\cdots T_k}$ is holomorphic, and we could then eliminate any undesired factor $T_{\ell}$ in the denominator simply by multiplying by $T_{\ell}$.

To prove the holomorphicity of the latter, observe that $\frac{F}{T_1\cdots T_{k}}$ is holomorphic at all the smooth points of $W_{\alpha}$, i.e., the points of $W_i$ where $T_j \neq 0$ for all $j \neq i$.  Since $\frac{F}{T_1\cdots T_{k}}$ is clearly holomorphic away from $W_{\alpha}$, it follows that the poles of $\frac{F}{T_1\cdots T_{k}}$ are contained in the set of points of $W_{\alpha}$ where at least two of the $T_j$ vanish.  But since any two branches of $W_{\alpha}$ intersect transversally, the polar set of $\frac{F}{T_1\cdots T_{k}}$ is contained in a subvariety of codimension at least $2$.  On the other hand, the polar set of $\frac{F}{T_1\cdots T_{k}}$ is a divisor, and therefore it must be empty.  
\end{proof}

For ease of notation, we now drop the subscript $\alpha$ and work on a fixed ball $B$, whose center we may assume, without loss of generality, is the origin.  

\begin{lem}\label{denom-clear}
Let $g :W_j \to \C$ be a holomorphic function such that  $\frac{g}{T_1\cdot \dots T_{j-1}} \in \co (W_j)$.  Then there exist positives constant $C < \frac{1}{\ve_o}$ and $\widehat C$ independent of the center of $B$ such that, with $B^{k,\ve_o}$ denoting the ball whose center is the center of $B$ and whose radius is $(1-C\ve_o)^k$ times that of $B$, 
\begin{equation}\label{local-est-for-denom-clear}
\int _{W_j \cap B^{j-1,\ve_o}} \frac{|g|^2e^{-\vp}}{|T_1 \cdots T_{j-1}|^2} \omega ^{n-1} \le \widehat C \int _{W_j} |g|^2 e^{-\vp} \omega ^{n-1}.
\end{equation}
\end{lem}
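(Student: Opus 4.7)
The plan is to rewrite the integrand using the divisibility hypothesis, reduce to an unweighted setting by freezing the weight holomorphically, and then iterate an elementary one-dimensional mean-value inequality.

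By hypothesis, $h := g/(T_1 \cdots T_{j-1}) \in \co(W_j)$, so $|g|^2/|T_1 \cdots T_{j-1}|^2 = |h|^2$, and \eqref{local-est-for-denom-clear} is equivalent to
\[
\int_{W_j \cap B^{j-1,\ve_o}} |h|^2 e^{-\vp}\, \omega^{n-1} \le \widehat C \int_{W_j} |T_1 \cdots T_{j-1}|^2 |h|^2 e^{-\vp}\, \omega^{n-1}.
\]
I would deduce this from $(j-1)$-fold iteration of the single-factor inequality: for each $i \in \{1,\ldots, j-1\}$ and each $u$ holomorphic on $W_j \cap B^{k,\ve_o}$,
\[
\int_{W_j \cap B^{k+1,\ve_o}} |u|^2 e^{-\vp}\, \omega^{n-1} \le C_1 \int_{W_j \cap B^{k,\ve_o}} |T_i|^2 |u|^2 e^{-\vp}\, \omega^{n-1}, \qquad (\ast)
\]
applied successively with $u = T_{i+1} \cdots T_{j-1}\,h$ for $i = 1, 2, \ldots, j-1$. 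Each application costs a single shrinking factor $(1-C\ve_o)$ in the radius, producing $B^{j-1,\ve_o}$ after all iterations. Since by (S$_N$) we have $j-1 \le \ve_o^{-1}$, the resulting constant $\widehat C \le C_1^{j-1}$ remains uniform in the center of $B$.

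To prove $(\ast)$, I would apply Lemma \ref{quimbo-trick} on a ball of twice the radius concentric with $B$, producing $G \in \co(B)$ such that $\vp - 2\Re G$ is bounded on $B$ by a constant depending only on $C$ from \eqref{gbf-weight}. Consequently $e^{-\vp}$ is comparable to $|e^{-G}|^2$ uniformly in the center of $B$, and the holomorphic substitution $\tilde u := u\, e^{-G}$ reduces $(\ast)$ to its unweighted version
\[
\int_{W_j \cap B^{k+1,\ve_o}} |\tilde u|^2\, \omega^{n-1} \le C_2 \int_{W_j \cap B^{k,\ve_o}} |T_i|^2 |\tilde u|^2\, \omega^{n-1}.
\]
The uniform gradient bounds \eqref{cauchy-est-tj-bounds} and the transversality bound \eqref{dt-lb} give uniform upper and lower bounds on $|d(T_i|_{W_j})|$; combined with the graph description (S) of $W_j$ from Definition \ref{u-flat-defn} and Lemma \ref{flat-properties}(G), this lets one introduce bi-Lipschitz coordinates on $W_j \cap B$, with Lipschitz constants independent of the center of $B$, in which $T_i|_{W_j}$ becomes the first complex coordinate $w$. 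By Fubini, the unweighted inequality then reduces to the one-variable statement
\[
\int_{|w|<r'} |v|^2\, dA \le \frac{2}{r^2} \int_{|w|<r} |w|^2 |v|^2\, dA,
\]
valid for every $v \in \co(|w|<r)$ and every $r' \le r$; this follows at once from the power series expansion $v(w) = \sum_{k\ge 0} a_k w^k$ and the Parseval formulae $\int_{|w|<R}|v|^2\,dA = \pi \sum |a_k|^2 R^{2k+2}/(k+1)$ and $\int_{|w|<R}|w|^2|v|^2\,dA = \pi \sum |a_k|^2 R^{2k+4}/(k+2)$, comparing them term by term.

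The principal difficulty is the \emph{uniformity} of all the constants and coordinate changes in the center of $B$ and in the indices $i,j$. This is precisely what the definition of uniform flatness supplies: the bound $N_p \le \ve_o^{-1}$ from (S$_N$) controls how many factors appear, the angle condition (S$_A$) prevents the branches from becoming tangent, and (R) together with Lemma \ref{flat-properties}(G) provides the uniform graph description that furnishes the adapted coordinates; the curvature bound \eqref{gbf-weight} in turn yields the $z$-independent constant in Lemma \ref{quimbo-trick}.
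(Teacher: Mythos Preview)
Your argument is correct and shares its scaffolding with the paper's proof: both invoke Lemma \ref{quimbo-trick} to eliminate the weight, both peel off one factor $T_i$ at a time on successively shrunk balls, and the paper's induction $g/(T_1\cdots T_{j-1}) = \bigl(g/(T_2\cdots T_{j-1})\bigr)/T_1$ is equivalent to your iteration of $(\ast)$ with $u = T_{i+1}\cdots T_{j-1}\,h$.  The real difference is in the single-factor step.  The paper proves it \emph{pointwise}: for $P\in W_j$ with $|T_1(P)|$ small, the Cauchy integral formula on the circle $|T_1-T_1(P)|=\ve_o/2^{n}$ gives $|g(P)/T_1(P)|\lesssim\sup|g|$ over a slightly larger set, whence a sup-norm inequality $\sup_{\text{smaller}}|g/T_1|\lesssim\sup_{\text{larger}}|g|$, and the $L^2$ bound then follows via the sub-mean-value property.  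Your Fubini--Parseval route is a clean $L^2$-direct alternative that bypasses the detour through sup norms; the paper's route, by contrast, is insensitive to the precise shape of the domain of integration.

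One point you should tighten.  After your holomorphic change of variables on $W_j$, the images of $W_j\cap B^{k,\ve_o}$ and $W_j\cap B^{k+1,\ve_o}$ are bi-Lipschitz images of balls, not polydisks, so their $w$-slices at fixed $z'$ are not a priori disks centred at $w=0$.  For your one-variable Parseval inequality to apply slice by slice with a uniform constant, you need, for every $z'$ meeting the smaller domain, a disk $\{|w|<r_0(z')\}$ sandwiched between the two slices with $r_0(z')$ uniformly bounded below.  This does hold once the shrinking constant $C$ is taken large enough relative to the (uniform, by (S$_A$) and \eqref{dt-lb}) bi-Lipschitz constant of the coordinate change, but it is a genuine geometric step rather than a tautology, and your sentence ``By Fubini, the unweighted inequality then reduces to the one-variable statement'' skips it.
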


\begin{proof}
By Lemma \ref{quimbo-trick} we may assume, upon replacing $g$ by $ge^{G}$ for an appropriate holomorphic function $G$ defined on a neighborhood of the closure of $B$, that $\vp = 0$.   

Suppose first that $j=2$.  We can assume without loss of generality (perhaps after slightly shrinking the ball $B$ at the outset by a small factor independent of the center of $B$) that $W_2$ is the unit ball with coordinates given by $T_1,z^2,...,z^{n-1}$.  Uniform flatness renders all of the scaling uniform in the center of $B$.   Let $P \in B$ satisfy $|P|=1-2\ve_o$.  If $|T_1(P)| \ge \frac{\ve _o}{2^{n+1}}$ then \[
\frac{|g(P)|}{|T_1(P)|} \le \frac{2^{n+1}}{\ve _o} |g(P)|.
\]
On the other hand, if $|T_1(P)| < \frac{\ve _o}{2^{n+1}}$ then by the Cauchy formula we have 
\begin{eqnarray*}
\left |\frac{g(P)}{T_1(P)}\right | &=& \frac{1}{2\pi} \left | \int _{|T_1-T_1(P)|= \frac{\ve_o}{2^n}}\frac{g(T_1,z^2(P),...,z^{n-1}(P))}{T_1(T_1-T_1(P))}dT_1 \right |\\
&\le &\frac{1}{2\pi}\int _{|T_1-T_1(P)|= \frac{\ve_o}{2^n}} \frac{|g(T_1,z^2(P),...,z^{n-1}(P))|}{|T_1-T_1(P)|^2\left (1-\frac{|T_1(P)|}{|T_1-T_1(P)|}\right )} |dT_1|\\
&\le & \frac{1}{2\pi}\int _{|T_1-T_1(P)|= \frac{\ve_o}{2^n}} \frac{2 |g(T_1,z^2(P),...,z^{n-1}(P))|}{|T_1-T_1(P)|}\frac{|dT_1|}{|T_1-T_1(P)|}\\
&\le & \frac{2^{n+1}}{\ve _o} \sup  \{ |g(T_1(Q),z^2(P),...,z^{n-1}(P)))|\ ;\ |T_1(Q)-T_1(P)| \le 2^{-n}\ve _o\}.
\end{eqnarray*}
Thus 
\[
\frac{|g(P)|}{|T_1(P)|} \le \frac{2^{n+1}}{\ve _o}\sup  \{ |g(Q)|\ ;\ \sqrt{|T_1(Q)-T_1(P)|^2+|z'(Q)-z'(P)|^2} \le 2^{-n}\ve _o\},
\]
where $z'=(z^2,...,z^{n-1})$.  By the maximum principle, 
\[
\sup _{B^{1,2\ve_o} \cap W_2} \frac{|g|}{|T_1|} \le \sup _{B^{1,\ve _o}\cap W_2} |g|.
\]
From these sup-norm estimates the result easily follows, and we have the case $j=2$.  If we write 
\[
\frac{g}{T_1\cdots T_{j-1}} = \frac{g/(T_2\cdots T_{j-1})}{T_1},
\]
the remaining cases follow by induction.
\end{proof}

\begin{proof}[Proof of Theorem \ref{local-extensions-with-bounds}]
Let 
\[
f_i := f|_{W_i}.
\]
Consider first the function $f_1$.  By \eqref{dt-lb} we have
\[
\int _{W_1} \frac{|f_1|^2e^{-\vp}}{|dT_1|^2} \omega ^{n-1} \lesssim \int _{W_1} |f_1|^2e^{-\vp} \omega ^{n-1} < +\infty.
\]
By Theorem \ref{ot-thm} with $\eta = 0$ and $\omega$ the Euclidean metric in $\C ^n$, there exists a holomorphic extension $F_1$ of $f_1$ to $B$ such that 
\[
\int _{B} |F_1|^2e^{-\vp} \omega ^n \lesssim \int _{W_1} |f_1|^2e^{-\vp} \omega ^{n-1}.
\]
Let $F^1 := F_1$, and notice that 
\[
F^1|_{W_1} = f_1 \quad \text{and} \quad \int _B |F_1|^2 e^{-\vp} \omega ^n \lesssim \int _{W\cap B} |f|^2e^{-\vp} \omega ^{n-1}.
\]

We now argue by induction.  Suppose we have found $F^{j-1} \in \co (B)$ such that 
\[
F^{j-1}|_{W_i} = f_i \quad \text{for} \quad 1 \le i \le j-1
\]
and 
\[
\int _{B} |F^{j-1}|^2 e^{-\vp} \omega ^n \lesssim \int _{B\cap W} |f|^2e^{-\vp} \omega ^{n-1}.
\]

Consider the function 
\[
f_j^* := f_j - F^{j-1}|_{W_j}.
\]
We observe that for each $1\le i \le j-1$, 
\begin{equation}\label{vanish-on-i}
f_j^* |_{W_i} = f_j|_{W_i} - f_i |_{W_j} = 0.
\end{equation}
The last equality follows because $f$ is assumed to have a local extension to a neighborhood of $W \cap B$ in $B$.  It follows from Lemma \ref{vanish=divide} that 
\begin{equation}\label{quotient-fn}
\frac{f_j^*}{\prod _{i=1} ^{j-1} T_i} \in \co (W_j - ((W_1\cap W_j) \cup  ... \cup (W_{j-1} \cap W_j)))
\end{equation}
extends holomorphically to $W_j$, and thus by Lemma \ref{denom-clear} the extension satisfies the estimate 
\begin{equation}\label{claim-est}
\int _{W_j\cap B^{j,\ve_o}} \frac{|f^*_j|^2e^{-\vp}}{|T_1\cdot \dots \cdot T_{j-1}|^2{|dT_j|^2}} \omega ^{n-1}  \lesssim \int _{W_j} |f_j^*|^2e^{-\vp} \omega ^{n-1} < +\infty.
\end{equation}
By Theorem \ref{ot-thm} there exists a holomorphic function $F_j^* \in \co (B^{j,\ve})$ such that 
\[
F_j^* |_{W_j} = f^*_j \quad \text{and} \quad \int _{B^{j,\ve}} \frac{|F_j^*|^2e^{-\vp}}{|T_1 \cdot \dots \cdot T_{j-1}|^2} \omega ^n \lesssim \int _{W} |f|^2e^{-\vp} \omega ^{n-1}.
\]
In particular, $F_j^*|_{W_i} = 0$ for all $1 \le i < j$.  Let 
\[
F^j := F^{j-1} + F^*_j.
\]
Then for $1 \le i < j$,  
\[
F^j|_{W_i} = F^{j-1}|_{W_i} = f_i.
\]
moreover, 
\[
F^j|_{W_j} = F^{j-1}|_{W_j} + F_j ^*|_{W_j} = f_j.
\]
Finally, 
\begin{eqnarray*}
\int _{B^{j,\ve}} |F^j|^2 e^{-\vp} \omega^n  & \lesssim & \int _{B^{j-1,\ve}} |F^{j-1}|^2 e^{-\vp} \omega ^n + \int _{B^{j,\ve}} |F^*_j|^2 e^{-\vp} \omega ^n \\
& \lesssim & \int _{B^{j,\ve}} |F^{j-1}|^2 e^{-\vp} \omega ^n + \int _{B^{j,\ve}} \frac{|F^*_j|^2 e^{-\vp}}{|T_1 \cdot \dots \cdot T_{j-1}|^2} \omega ^n \\
& \lesssim & \int _{W\cap B} |f|^2 e^{-\vp} \omega ^{n-1}.
\end{eqnarray*}
By induction on $j$ we obtain the existence of a function $F:= F^k \in \co (B)$ which evidently satisfies the desired conclusions.  This completes the proof.
\end{proof}

Now that we have found our local extensions with good bounds, we patch them together.

\subsection{The patching process}\label{patching}

We begin with the balls $B_{\alpha}$ and functions $F_{\alpha}$ of Theorem \ref{local-extensions-with-bounds}.  We can assume that our open cover $\{B_{\alpha}\}$ is such that 
\[
B_{\alpha} = B(w_{\alpha} ,2a_{\alpha}\ve) \quad \text{and} \quad \bigcup _{j} B(w_{\alpha},a_{\alpha}\ve) \supset W.
\]
Here $a_{\alpha}=a$ if $w_{\alpha} \in W_{\rm sing}$ and $a_{\alpha} = 1$ if $w_{\alpha} \in W_{\rm reg}$.  For simplicity of exposition, let 
\[
\widehat B_{\alpha} := B(w_{\alpha},a_{\alpha}\ve )
\]
be the notation for the `half-balls'.  We assume the index ${\alpha}$ begins at $1$, and add the set 
\[
B_o = \widehat B_0 := \C ^n - \left (U_{\ve}(W) \cup U_{a\ve}(W_{\rm sing})\right )
\]
to the open cover, to obtain an open cover of $\C ^n$.  We let $F_0 = 0$.  Then we fix a partition of unity $\{ \phi _{\alpha} \}$ subordinate to the cover $\{ \widehat B_{\alpha}\}$, which we will assume has the property 
\[
\sum _{\alpha} |d \phi _{\alpha}|^2 \le C.
\]
We seek a global holomorphic function $F$ such that 
\[
F|_W= f \quad \text{and} \quad \int _W |F|^2 e^{-\vp} \omega ^n < +\infty.
\]
To this end, let 
\[
G_{\alpha\beta} = F_{\alpha} - F_{\beta} \in \co (\widehat B_{\alpha} \cap \widehat B_{\beta}).
\]
Let us set 
\begin{equation}\label{psi-choice}
\psi := \vp _r + s_r.
\end{equation}
We have
\[
G_{\alpha \beta}|_{W\cap \widehat B_{\alpha} \cap \widehat B_{\beta}} \equiv 0 \quad \text{and} \quad \int _{\widehat B_{\alpha} \cap \widehat B_{\beta}} |G_{\alpha \beta}|^2 e^{-\psi} \omega ^n \lesssim \int _{W_{\rm reg} \cap B_{\alpha} \cap B_{\beta}}|f|^2 e^{-\vp} \omega ^{n-1}.
\]
The vanishing of the restriction is obvious, and the inequality is established in exactly the same way as in the proof of Lemma 4.3 in \cite{osv}.  We also take this opportunity to observe that 
\begin{equation}\label{pre-hormander}
\ii \di \dbar \psi \ge  \ii \di \dbar \vp _r - \Upsilon ^W_r = \delta ' \ii \di \dbar \vp _r + (1-\delta') \vp _r - \Upsilon^W_r \ge \delta ' \omega
\end{equation}
for some $\delta' >0$ sufficiently small.

We claim that there are functions $G_{\alpha} \in \co (\widehat B_{\alpha})$ such that 
\begin{equation}\label{cousin-correction}
G_{\alpha} |_{W \cap B_{\alpha}} \equiv 0, \quad \int _{\widehat B_{\alpha}}|G_{\alpha}|^2 e^{-\vp} \omega ^n \lesssim \int _{W_{\rm reg}\cap  B_{\alpha}} |f|^2 e^{-\vp} \omega ^{n-1} \quad \text{and} \quad G_{\alpha} \in \co (\widehat B_{\alpha}).
\end{equation}
The functions 
\[
\tilde G_{\alpha} := \sum _{\beta} G_{\alpha \beta} \phi _{\beta}
\]
have the first two of the properties \eqref{cousin-correction}.  Moreover, we have 
\[
\dbar (\tilde G_{\alpha} - \tilde G_{\beta}) = \dbar G_{\alpha \beta} = 0 \quad \text{on }\widehat B_{\alpha} \cap \widehat B_{\beta},
\]
and thus we can define the global $\dbar$-closed $(0,1)$-form 
\[
H = \dbar \tilde G_{\alpha} = \sum _{\beta} G_{\alpha \beta} \dbar \phi _{\beta} \quad \text{on }\widehat B_{\alpha}.
\]
We calculate that
\begin{eqnarray*}
\int _{\C ^n} |H|^2 e^{-\psi} \omega ^n &\lesssim & \sum_{\alpha \beta} \int _{\widehat B_{\alpha} \cap \widehat B_{\beta}} |G_{\alpha \beta}|^2 |d\phi _{\beta}|^2 \omega ^n \\
&\lesssim & \sum_{\alpha \beta} \int _{W_{\rm reg} \cap B_{\alpha} \cap B_{\beta}} |f|^2 e^{-\vp} \omega ^{n-1} \\
&\lesssim & \int _{W_{\rm reg}} |f|^2 e^{-\vp} \omega ^{n-1}.
\end{eqnarray*}
Since the right side is finite, H\"ormander's Theorem (which, in view of \eqref{pre-hormander}, may be used) provides a function $u$ satisfying
\[
\dbar u = H \quad \text{and} \quad \int _{\C ^n} |u|^2 e^{-\vp}\omega ^{n-1} \le \int _{\C ^n} |u|^2 e^{-\psi}\omega ^{n-1} < +\infty
\]
The second estimate implies that $u|_W\equiv 0$.  The functions 
\[
G_{\alpha} := \tilde G_{\alpha} - u
\] 
are therefore holomorphic and satisfy \eqref{cousin-correction}, and the function 
\[
F:= F_{\alpha} - G_{\alpha} \quad \text{on }\widehat B_{\alpha}
\]
satisfies 
\[
F|_W = f \quad \text{and} \quad \int _{\C ^n}|F|^2 e^{-\vp} \omega ^n < +\infty.
\]
The proof of Theorem \ref{suff-analytic} is thus complete.
\qed

\section{Non-uniformly flat hypersurfaces may have extension}\label{non-necess}

In this last section, we give a negative answer to the question of whether uniform flatness is necessary for the surjectivity of $\sr _W$.  In fact, there are two cases one should treat separately.  The first case is that of singular hypersurfaces, and the second that of smooth hypersurfaces.

\subsection{Non-necessity for singular hypersurfaces}

The content of this section is an observation of Ohsawa \cite{o-ober} to the effect that the restriction map $\sr _W : \sh (\C ^n, \vp) \to \fH (W,\vp)$ may be surjective even if $W$ is not uniformly flat.  In fact, let $W = \{ T=0\}$ be a hypersurface with an isolated singularity at $0$, i.e., 
\[
T(z)= dT(z)= 0 \iff z=0.
\]
Let $f \in \fH (W, \vp)$.  Then there is a polynomial $P \in \C [z^1,...,z^n]$ such that 
\[
\int _{W_{\rm reg}} \frac{|f-P|^2}{|dT|^2}e^{-\vp} \omega ^{n-1} < +\infty.
\]
Assume now that 
\[
D^+_{\vp}(W) < 1 \quad \text{and} \quad \int _{\C ^n} |P|^2 e^{-\vp} \omega ^n < +\infty
\]
(which holds, for example, if $W$ is algebraic and $\vp(z) = |z|^2$).  Then Theorem \ref{suff-anal-sing} implies that there is a function $F_o \in \sh (\C ^n,\vp)$ such that 
\[
F_o |_W = f- P|_W.
\]
It follows that $F:= F_o+P$ is the desired extension.

\subsection{Non-necessity for smooth hypersurfaces}

The main result of this section is the following theorem.

\begin{thm}\label{non-necess-smooth}
The embedded smooth curve  
\[
W:= \{ (x,y) \in \C ^2\ ;\ xy\sin y = 1\} \emb \C ^2
\]
is not uniformly flat, but nevertheless $\sr _W : \sh (\C ^2, |\cdot |^2) \to \fH (W,|\cdot |^2)$ is surjective.
\end{thm}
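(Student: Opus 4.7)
The plan is to establish the two claims in Theorem \ref{non-necess-smooth} separately.

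\emph{Non-uniform flatness.} For large $|k|$, the equation $xy\sin y = 1$ with $y$ near $k\pi$ is well-approximated by the hyperbola $(y - k\pi)x = (-1)^k/(k\pi)$, whose waist lies at $p_k \approx ((k\pi)^{-1/2}, k\pi + (-1)^k(k\pi)^{-1/2})$. Writing $W$ locally near $p_k$ as a graph of $\phi_{p_k}$ over the tangent line $T_{W, p_k}$, a direct computation yields $|\phi_{p_k}(v)| \asymp \sqrt{k\pi}\,|v|^2$ in suitable coordinates. Proposition \ref{osv-uf}(G) requires this quadratic coefficient to be bounded by $1/\ve _o$ independently of $p_k$; since $\sqrt{k\pi}\to\infty$, no such $\ve _o$ can exist and $W$ fails to be uniformly flat.

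\emph{Surjectivity.} The projection $(x,y)\mapsto y$ is a biholomorphism $W \cong \C\setminus\pi\Z$, so $f\in\fH(W,|z|^2)$ corresponds to a holomorphic $g$ on $\C\setminus\pi\Z$ whose $L^2$ norm involves the weight $e^{-(1/|y\sin y|^2+|y|^2)}(1+|x'(y)|^2)$. The super-exponential decay of $e^{-1/|y\sin y|^2}$ near each $k\pi$ allows $g$ to have essential singularities there of controlled ``Fock-type'' growth. A direct area computation shows $\area(W\cap B(z,r))\lesssim r^2$ uniformly in $z$ (each waist contributes only $\asymp r^2$), so the density satisfies $D^+_{|z|^2}(W)=0$.

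To construct the extension, introduce the entire functions
\[
\phi_k(y) := e^{k\pi(y-k\pi)}\,\frac{(-1)^k\sin y}{y-k\pi},\qquad k\in\Z,
\]
which satisfy $\phi_k(k\pi)=1$, $\phi_k(j\pi)=0$ for $j\neq k$, and $\|\phi_k\|^2_{\sh(\C_y,|y|^2)} \asymp \pi e^{-k^2\pi^2}$. Let $h_k(x)$ be an entire function on $\C_x$ matching the branch restriction $x\mapsto f(x,y_k(x))$, obtained by $\dbar$-extension on $\C_x$ with weight $e^{-|x|^2}$; its Fock norm satisfies $\|h_k\|^2\asymp e^{k^2\pi^2}\|f\|^2_{\text{branch}_k}$. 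Form
\[
F_0(x,y) := g_\infty(y)+\sum_{k\in\Z\setminus\{0\}} h_k(x)\,\phi_k(y),
\]
where $g_\infty\in\sh(\C_y,|y|^2)$ absorbs the entire part of $g$ in its Mittag-Leffler decomposition. A cross-term estimate $|\langle\phi_k,\phi_j\rangle|\asymp e^{-(k-j)^2\pi^2/2-(k^2+j^2)\pi^2/2}$ combined with Cauchy-Schwarz gives $\|F_0\|^2_{\sh(\C^2,|z|^2)}\lesssim\|f\|^2_{\fH(W,|z|^2)}$. The residual $E:=f-F_0|_W$ decays as $1/|x|$ along each branch, hence lies in the narrower space $\ch(W,|z|^2)$; Theorem \ref{suff-anal-sing} (applicable since $D^+_{|z|^2}(W)<1$) supplies a correction $\delta F\in\sh(\C^2,|z|^2)$ with $\delta F|_W=E$. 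Then $F:=F_0+\delta F$ is the desired extension.

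The principal obstacles are the entire extension of each $h_k$ on $\C_x$, and the verification that the residual $E$ lies in $\ch(W,|z|^2)$ with norm $\lesssim\|f\|_{\fH}$. Both rest on precise $\dbar$-estimates with weights adapted to the Bargmann-Fock geometry; the exponential decay $e^{-k^2\pi^2}$ of $\|\phi_k\|$ exactly cancels the exponential growth $e^{k^2\pi^2}$ in $\|h_k\|$, and the Gaussian orthogonality between distinct $\phi_k,\phi_j$ prevents the sum from blowing up.
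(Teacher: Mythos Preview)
Your non-flatness argument is essentially correct and matches the paper's reasoning: near $(0,k\pi)$ the curve is a small perturbation of a hyperbola $x(y-k\pi)=\text{const}/k$, and the second fundamental form at the waist blows up like $\sqrt{k}$, violating the uniform bound \eqref{quad-graph}.

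The surjectivity argument, however, has genuine gaps.

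\textbf{The definition of $h_k$ is not well-posed.}  You write that $h_k\in\co(\C_x)$ is ``obtained by $\dbar$-extension on $\C_x$ with weight $e^{-|x|^2}$'' so as to match $x\mapsto f(x,y_k(x))$.  But $y_k(x)$ is only defined for $|x|$ large (the branch near $y=k\pi$ corresponds to $x\to\infty$), so the datum lives on an open annulus in $\C_x$, not on a hypersurface; there is no $\dbar$-extension mechanism that produces an entire function from such data.  If instead you mean to take the principal part of the Laurent expansion of $g$ at $k\pi$ and convert it to a power series in $x$ via $x\approx(-1)^k/(k\pi(y-k\pi))$, you must say so and then justify the norm estimate $\|h_k\|^2\asymp e^{k^2\pi^2}\|f\|^2_{\text{branch}_k}$ directly---it does not follow from any $\dbar$ argument.

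\textbf{The residual step does not close.}  To invoke Theorem \ref{suff-anal-sing} you need $E\in\ch(W,|\cdot|^2)$, i.e.\ finiteness of $\int_W |E|^2 e^{-|z|^2}/|\partial\rho_r|^2$.  The denominator $|\partial\rho_r|^2=|dT|^2e^{-\psi_T}$ is \emph{not} bounded below on $W$: Lemma \ref{flat-lem} requires uniform flatness, which you have just shown fails.  At the waist of the $k$-th branch, $|dT|^2$ for $T=xy\sin y-1$ is of order $1/k$, so you would need $|E|^2$ to decay there at least like $1/k$, uniformly in $k$.  The vague claim that $E$ ``decays as $1/|x|$ along each branch'' neither implies this nor is it proved.

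\textbf{What the paper does instead.}  The paper avoids any global series construction.  It works \emph{locally}: on a fixed bidisk $\Delta^2_{(0,k\pi)}(\ve)$ the curve is a uniform $\sC^1$-perturbation of the model $W_s=\{xy=s\}$ with $s=(-1)^{k+1}/(k\pi)$.  For the model, the Laurent expansion of $j^*f$ on the annulus $\{|s|/\ve<|t|<\ve\}$ is split into nonnegative and negative powers, and the explicit extension $F(x,y)=a_0+\sum_{n>0}(a_nx^n+s^{-n}a_{-n}y^n)$ is shown, by direct computation of the coefficients $C_{j,|s|}$, to have Bargmann--Fock norm $\le\sqrt{2\pi}\,\|f\|$ uniformly in small $|s|$.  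Away from the bidisks $W$ is genuinely uniformly flat, so the usual local extensions from \cite{osv} apply.  These local extensions are then patched exactly as in Section \ref{patching}, using $D^+_{|\cdot|^2}(W)=0$.  The key point you are missing is that the extension problem is solved \emph{locally with uniform constants} near each waist, and the global step is just H\"ormander patching---no appeal to Theorem \ref{suff-anal-sing} or to $\ch(W,\vp)$ is needed.
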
 

Near the points $(0,2\pi n)$ for $n \in \Z$ with $|n| >>0$, the curve $W$ looks a lot like the curve 
\[
V_n := \{ (x,y)\ |\ x(y-2\pi n) = \frac{1}{2\pi n}.
\]
From this approximation of $W$ by such model curves, it follows immediately that $W$ is not uniformly flat.  On the other hand, the model curve $V_n$ `converges', as $|n|\to\infty$, to a singular curve which, by Theorem \ref{suff-analytic}, induces a surjective restriction map.  Thus if we can establish some continuity in the extension process, we will be able to prove the claimed surjectivity of $\sr _W$.

The approach is to begin by establishing the desired continuity in the model case, and then patching together perturbations of the model extensions.

\subsubsection{\sf The model case}

The continuity of the extension process we seek in this model case may be expressed by saying that there is an extension operator $\se _{s} : \fH (W_s, |\cdot |^2) \to \sh (\C ^2, |\cdot |^2)$ whose square norm 
\[
||\se _s||^2 = \sup \left \{ \int _{\C ^2}|\se _sf|^2 e^{-|\cdot |^2}\omega ^2\ \left | \ \int _{W_s}|f|^2 e^{-|\cdot |^2}\omega=1 \right \} \right .
\]
is bounded independent of $s$.  The extension operator in question is going to be the extension of minimal norm.  To bound this operator, we need {\it any} extension operator with the desired bounds.

We now define an extension operator that works.  Consider the map 
\[
j : \C ^* \to W_s ; t \mapsto (t, st^{-1}).
\]
We pull back functions and integrals on $W_s$ to functions and integrals on $\C ^*$.  If we have $f \in \fH (W, |\cdot |^2)$ then with 
\[
j ^*f (t) = \sum _{j \in \Z} a_j t^j
\]
we have 
\[
||f||^2 = \int _{\C ^*} \left ( \sum _{j,k\in \Z}a_j\bar a_k  t^j \bar t^k \right )(1+ |s|^2|t|^{-4}) e^{-(|t|^2 + |s|^2 |t|^{-2})}\frac{\ii}{2} dt \wedge d\bar t= \sum _{j \in \Z} |a_j|^2 C_{j,|s|},
\]
where 
\[
C_{j,|s|} = \pi \int _0 ^{\infty}r^{2j}(1+|s|^2r^{-4}) e^{-(r^2 + |s|^2 r^{-2})}r dr.
\]
We will need lower bounds for the constants $C_{j,|s|}$.  For positive $j$, it is easy to estimate these constants, and the symmetry of the curve $W_s$ will give us a handle on the case of negative $j$.

\begin{lem}\label{laurent-constants}
The constants $C_{j,|s|}$ have the following properties.
\begin{enumerate}
\item[(i)] For $j \ge 0$ and $|s|$ sufficiently small, $C_{j,|s|} \ge  \frac{\pi}{2} (j!)$.
\item[(ii)] For all $j$ and $s$, $C_{j,|s|} = |s|^{-2j}C_{-j,|s|}$.
\end{enumerate}
\end{lem}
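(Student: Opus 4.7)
Part (ii) is most naturally established by the change of variables $r\mapsto |s|/r$ in the integral defining $C_{j,|s|}$. Since the exponent $r^2 + |s|^2 r^{-2}$ is symmetric under this substitution, the only nontrivial bookkeeping involves the prefactors: $r^{2j}\mapsto |s|^{2j}r^{-2j}$, $(1+|s|^2 r^{-4})\mapsto (1+r^4/|s|^2)$, and $r\,dr\mapsto -|s|^2 r^{-3}\,dr$. After collecting powers of $|s|$, the transformed integrand is a power of $|s|$ times the integrand defining $C_{-j,|s|}$, giving the claimed proportionality in one line of computation.

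For part (i), my plan is to first simplify via the substitution $u = r^2$, which puts the integral in the form
\[
C_{j,|s|} = \frac{\pi}{2}\int_0^\infty (u^j + |s|^2 u^{j-2})\,e^{-u - |s|^2/u}\,du.
\]
At $|s|=0$, the first summand collapses to the Gamma integral $\int_0^\infty u^j e^{-u}\,du = j!$, producing the clean equality $C_{j,0} = (\pi/2)j!$. For $|s|>0$ small I would control the two perturbations separately: the factor $e^{-|s|^2/u}\le 1$ decreases the first summand, but the loss is concentrated near $u=0$ and can be controlled by dominated convergence after restricting to $u\ge \delta$; the extra positive term $|s|^2\int u^{j-2} e^{-u-|s|^2/u}\,du$ contributes mass concentrated near $u\sim|s|^2$, which, after the further reparametrization $v=|s|^2/u$, is seen to approach a similar Gamma-type integral and to offset the loss from the first piece. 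Combining these two estimates gives a lower bound of the required shape for $|s|$ sufficiently small.

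The main technical obstacle I anticipate is making the threshold ``sufficiently small'' uniform in $j\ge 0$, since the bound will be applied termwise to a Laurent series and any $j$-dependence in the threshold would be fatal to the application. I would address this either by a Laplace-type rescaling $u = jv$, which concentrates $u^j e^{-u}$ near $v=1$ and makes the dominated-convergence estimate $j$-uniform, or equivalently by encoding $C_{j,|s|}$ in terms of modified Bessel functions via the substitution $r = \sqrt{|s|}\,e^t$ and then invoking the uniform small-argument behavior of $|s|^\nu K_\nu(2|s|)$ for $\nu\ge 0$; in the latter form, the lemma reduces to a single quantitative asymptotic statement.
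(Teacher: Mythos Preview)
Your approach coincides with the paper's on both parts: the paper proves (ii) by exactly the substitution $r\mapsto |s|/r$, and for (i) it bounds $C_{j,|s|}$ above by a multiple of $j!$ to produce a dominating function and then invokes the Dominated Convergence Theorem to identify the limit, just as you outline (your preliminary change $u=r^2$ is a cosmetic simplification the paper omits).

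Where you diverge is in explicitly raising, and proposing to resolve, the uniformity-in-$j$ of the threshold on $|s|$. The paper's proof simply applies DCT and declares (i) proved, without addressing whether ``sufficiently small'' can be taken independent of $j$; yet the subsequent lemma requires a single $r_o$ that works for every Laurent mode simultaneously, so your concern is legitimate and your suggested fixes (Laplace rescaling $u=jv$, or the Bessel-function reformulation $r=\sqrt{|s|}\,e^t$) are sound ways to supply what the paper leaves implicit. In that sense your proposal is the same argument, made more honest.
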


\begin{proof}
First, let $j \ge 0$.  Then
\begin{eqnarray*}
C_{j,|s|} &\le & \pi \int _0 ^{\infty} r^{2j+1}e^{-r^2} dr + \pi \int _0 ^{\infty} (r^{2j}e^{-r^2}) e^{-|s|^2/r^2} \frac{|s|^2}{r^3} dr\\
& \le & \pi (j!) + \frac{\pi}{2} j^je^{-j}\int _0^{\infty} e^{-u} du\\
& \le & 2\pi (j!)
\end{eqnarray*}
Thus by the Dominated Convergence Theorem, 
\[
\lim _{|s| \to 0} C_{j,|s|} = \pi (j!),
\]
and Property (i) is proved.  Property (ii) is obtained by substituting $r \mapsto |s|/r$ in the integral.
\end{proof}

We now define the extension of $f$ to be the holomorphic function $F$ given by the Taylor series
\[
F(x,y) = a_0 + \sum _{n > 0} (a_n x^n + s^{-n}a_{-n} y^n),
\]
where 
\[
j^*f (t) = \sum _{n \in \Z} a_n t^n.
\]
Note that $j^* F= j^*f$ so that $F$ extends $f$, and that 
\begin{eqnarray*}
||F||^2 &:=& \int _{\C ^2} |F(x,y)|^2 e^{-(|x|^2+|y|^2)}\frac{\ii}{2} dx \wedge d\bar x \wedge \frac{\ii}{2} dy \wedge d\bar y \\
& = & \int _0 ^{\infty}\!\!\!\! \int _0 ^{\infty}\!\!\!\! \int _0 ^{2\pi} \!\!\!\! \int _0 ^{2\pi}\!\!\!\! (a_0 + \sum _{n > 0} (a_n e^{\ii n\theta }r^n + s^{-n}a_{-n} e^{\ii n\phi} \rho ^n))\times \\
&& \qquad (\bar a_0 + \sum _{m > 0} (\bar a_m e^{-\ii m\theta} r^m + \bar s^{-m}\bar a_{-m}e^{-\ii m\phi}\rho^m)) d\theta d\phi e^{-r^2} rdr e^{-\rho ^2}\rho d\rho\\
& = & 4 \pi ^2 \int _0 ^{\infty}\!\!\!\! \int _0 ^{\infty}\!\!\!\! \left (|a_0|^2 + \sum _{n>0} |a_n|^2 r^{2n}+ |s|^{-2n}|a_{-n}|^2 \rho ^{2n} \right )e^{-r^2} rdr e^{-\rho ^2}\rho d\rho\\
& = & \pi ^2\left ( \sum _{n \ge 0} |a_n|^2 n! + \sum _{n < 0} |a_n|^2 |s|^{2n} (-n)! \right )\\
&& \quad \le  2 \pi \sum _{n \in \Z} C_{n,|s|} |a_n|^2 = 2 \pi ||f||^2.
\end{eqnarray*} 
The inequality is of course a consequence of Lemma \ref{laurent-constants}.  We have thus proved the following  lemma.

\begin{lem}\label{model-extension-estimate}
There exists a positive number $r_o$ such that for $0<|s| < r_o$, the minimal extension operator $\se _s : \fH (W_s,|\cdot |^2)\to \sh (\C ^2, |\cdot |^2)$ satisfies 
\[
||\se _s|| \le\sqrt{2 \pi}.
\]
\end{lem}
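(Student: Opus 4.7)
The plan is to construct an explicit extension operator whose norm is at most $\sqrt{2\pi}$; since $\se_s(f)$ is by definition the extension of minimal norm, any such construction immediately bounds $\|\se_s\|$. The parameterization $j:\C^*\to W_s$, $t\mapsto(t,s/t)$ suggests the natural candidate: given $j^*f(t)=\sum_{n\in\Z}a_n t^n$, I would extend the non-negative modes $a_n t^n$ to the ambient monomial $a_n x^n$, and rewrite each negative mode using the identity $t^{-n}=s^{-n}y^n$ on $W_s$, extending it to $s^{-n}a_{-n}y^n$ on $\C^2$. The resulting function
\[
F(x,y):=a_0+\sum_{n>0}\bigl(a_n x^n+s^{-n}a_{-n}y^n\bigr)
\]
manifestly pulls back along $j$ to $j^*f$, so it genuinely extends $f$.

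Next I would compute $\|F\|^2$ in the Bargmann--Fock norm directly. Since $F$ is a linear combination of pairwise distinct monomials $x^my^n$, which are mutually orthogonal under the Gaussian weight $e^{-(|x|^2+|y|^2)}$, all cross terms vanish and elementary polar integration in each variable yields the exact identity
\[
\|F\|^2=\pi^2\sum_{n\ge 0}|a_n|^2\,n!+\pi^2\sum_{n>0}|a_{-n}|^2\,|s|^{-2n}\,n!.
\]

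The final step is to match this term by term against $\|f\|^2=\sum_{n\in\Z}|a_n|^2 C_{n,|s|}$. For non-negative $n$, Lemma~\ref{laurent-constants}(i) gives $n!\le(2/\pi)C_{n,|s|}$ uniformly for $|s|<r_o$. For negative $n$, combining the symmetry in Lemma~\ref{laurent-constants}(ii) with the same lower bound applied at the opposite index yields $|s|^{-2n}\,n!\le(2/\pi)C_{-n,|s|}$ for $n>0$. Summing the two bounds produces $\|F\|^2\le 2\pi\|f\|^2$, so $\|\se_s\|\le\sqrt{2\pi}$. All of the genuine work has been packaged into Lemma~\ref{laurent-constants}, in particular the uniformity in $j$ of the lower bound in part (i) for sufficiently small $|s|$; beyond that, the proof is routine Gaussian bookkeeping with no real obstacle.
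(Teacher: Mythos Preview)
Your proposal is correct and follows the paper's argument essentially verbatim: the same explicit extension $F(x,y)=a_0+\sum_{n>0}(a_nx^n+s^{-n}a_{-n}y^n)$, the same orthogonality computation giving $\|F\|^2=\pi^2\bigl(\sum_{n\ge 0}|a_n|^2 n!+\sum_{n>0}|a_{-n}|^2|s|^{-2n}n!\bigr)$, and the same termwise comparison via Lemma~\ref{laurent-constants}. Your exposition of the final step is in fact slightly cleaner than the paper's, and note that the symmetry you need is $C_{-n,|s|}=|s|^{-2n}C_{n,|s|}$, which is what the substitution $r\mapsto|s|/r$ actually gives (the paper's statement of (ii) has a sign typo in the exponent, but both you and the paper use the correct version in the estimate).
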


\subsubsection{\sf Clipping and translating the model}

Let $c=(a,b) \in \C^2$ and $\ve > 0$.  Define
\[
W_{s}(\ve;c) := \left \{ (x,y)\in \C^2\ ;\ (x-a)(y-b)=s \text{ and }
\max (|x-a|, |y-b|)< \ve \right \} \subset \Delta ^2 _{c}(\ve),
\]
where $\Delta ^2 _{c}(\ve)=\{(x,y) \in \C ^2\ ; \max( |x-a|, |y-b|) < \ve \}$.   We will now modify the calculation for the model to show the following.

\begin{lem}\label{clipped-model-est}
For each $f \in \co (\overline{W_s(\ve;c)})$ there exists $F \in \co (\Delta ^2_{c}(\ve))$ such that 
\[
F|_{W_s(\ve;c)}= f \quad \text{and} \int _{\Delta ^2_{c}(\ve)} |F(z)|^2 e^{-|z|^2}dV(z) \le 2\pi  \int _{W_s(\ve;c)} |f(z)|^2 e^{-|z|^2} \omega (z).
\]
\end{lem}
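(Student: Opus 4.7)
The plan is to adapt, in the clipped and translated setting, the Laurent-series extension used to prove Lemma~\ref{model-extension-estimate}.

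First I would reduce to $c=0$. Setting $u=x-a$, $v=y-b$, one has
\[
|z|^2 = |u|^2+|v|^2+2\re h(u,v)+|c|^2,\qquad h(u,v):=\bar a u + \bar b v,
\]
where $h$ is holomorphic in $(u,v)$. Hence $e^{-|z|^2} = e^{-|c|^2}|e^{-h}|^2 e^{-|u|^2-|v|^2}$, so setting $g(u,v) := f(u+a,v+b)\,e^{-h(u,v)}$ reduces the problem to finding $H\in\co(\Delta^2_0(\ve))$ that extends $g$ from $\tilde W_s := W_s\cap \Delta^2_0(\ve)$ (with $W_s=\{uv=s\}$) and satisfies
\[
\int_{\Delta^2_0(\ve)}|H|^2 e^{-|u|^2-|v|^2}\,dV \le 2\pi \int_{\tilde W_s}|g|^2 e^{-|u|^2-|v|^2}\,\omega;
\]
then $F(x,y):=H(x-a,y-b)\,e^{h(x-a,y-b)}$ is the required extension of $f$ and the factors $e^{-|c|^2}$ cancel on both sides.

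For the reduced problem I would mimic the construction in Lemma~\ref{model-extension-estimate}: parameterize $\tilde W_s$ by $j(t)=(t,s/t)$ with $t$ in the annulus $A:=\{|s|/\ve<|t|<\ve\}$ (if $|s|\ge\ve^2$ the curve piece is empty and the claim is vacuous), expand $j^*g(t)=\sum_{n\in\Z}a_n t^n$, and set
\[
H(u,v) := \sum_{n\ge 0} a_n u^n + \sum_{n>0} s^{-n} a_{-n} v^n.
\]
The convergence of the Laurent series on $A$ gives the first sum as a holomorphic function on $|u|<\ve$, and, via the identification $a_{-n}t^{-n}=s^{-n}a_{-n}v^n$ with $v=s/t$, the second on $|v|<\ve$; direct substitution gives $H\circ j=j^*g$, so $H|_{\tilde W_s}=g$. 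Next I would compute both $L^2$ integrals in polar coordinates. Angular orthogonality (which also kills the cross terms between the two sums in $H$, since $\int_0^{2\pi} e^{-\ii n \phi}\,d\phi = 0$ for $n\ge 1$) yields
\[
\int_{\Delta^2_0(\ve)}|H|^2 e^{-|u|^2-|v|^2}\,dV = \pi(1-e^{-\ve^2})\Bigl(\sum_{n\ge 0}|a_n|^2\alpha_n + \sum_{n>0}|s|^{-2n}|a_{-n}|^2\alpha_n\Bigr),
\]
with $\alpha_n:=2\pi\int_0^\ve r^{2n+1}e^{-r^2}\,dr$, and
\[
\int_{\tilde W_s}|g|^2 e^{-|u|^2-|v|^2}\,\omega = \sum_{n\in\Z}|a_n|^2\beta_n,\quad \beta_n:=2\pi\int_{|s|/\ve}^{\ve}r^{2n+1}\bigl(1+\tfrac{|s|^2}{r^4}\bigr)e^{-r^2-|s|^2/r^2}\,dr.
\]

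It therefore suffices to prove the coefficient-wise bound $\pi(1-e^{-\ve^2})\alpha_n\le 2\pi\beta_n$ for all $n\in\Z$. The substitution $r\mapsto|s|/r$ yields the symmetry $\beta_{-n}=|s|^{-2n}\beta_n$---the clipped analog of Lemma~\ref{laurent-constants}(ii)---reducing matters to $n\ge 0$. This last inequality, the clipped counterpart of Lemma~\ref{laurent-constants}(i), is the main technical obstacle. In the regime relevant to the application (where the model curves $V_n$ with $s=1/(2\pi n)$ approximate $W$ near $(0,2\pi n)$ for $|n|$ large, so $|s|\ll\ve^2$), the estimate $(1+|s|^2/r^4)e^{-|s|^2/r^2}\ge e^{-\ve^2}$ on $[|s|/\ve,\ve]$ combined with the negligible tail bound $\int_0^{|s|/\ve}r^{2n+1}e^{-r^2}\,dr\le(|s|/\ve)^{2n+2}/(2n+2)$ versus $\alpha_n\ge 2\pi e^{-\ve^2}\ve^{2n+2}/(2n+2)$ gives $\beta_n\ge\bigl(e^{-\ve^2}-C(|s|/\ve^2)^{2n+2}\bigr)\alpha_n$ uniformly in $n\ge 0$, which comfortably dominates $(1-e^{-\ve^2})/2$ for $\ve$ small. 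Summing over $n$ then closes the estimate and completes the proof.
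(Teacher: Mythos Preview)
Your proof is correct and follows essentially the same route as the paper: reduce to $c=0$ by conjugating with the holomorphic factor coming from $|z|^2=|\zeta|^2+2\re(\zeta\cdot\bar c)+|c|^2$, parameterize the clipped curve by the annulus $\{|s|/\ve<|t|<\ve\}$, use the identical Laurent-series extension $H$, and reduce to a coefficient-wise inequality via the symmetry $\beta_{-n}=|s|^{-2n}\beta_n$. The only cosmetic difference is in the last step: the paper obtains the lower bound on the clipped constants $C_{j,|s|}(\ve)$ by dominated convergence as $|s|\to 0$, whereas you give the explicit pointwise estimate $(1+|s|^2/r^4)e^{-|s|^2/r^2}\ge e^{-\ve^2}$ together with a tail bound---both arguments require $|s|$ small relative to $\ve$, which is also the regime in which the lemma is applied.
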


\begin{proof}
To begin, we translate $W_s(\ve;a)$ to the origin with the change of variables 
\[
\zeta = (\xi,\eta) = (x-a,y-b)=z-c.
\]
Then
\[
\int _{W_s(\ve;c)} |f(z)|^2 e^{-|z|^2} \omega (z) = \int _{W_s(\ve;0)} |f(\zeta+c) e^{\zeta \cdot c - \tfrac{1}{2} |c|^2} |^2 e^{-|\zeta|^2} \omega (\zeta).
\]
Thus the function 
\[
f_c(\zeta) = f(\zeta +c)e^{\zeta \cdot c - \tfrac{1}{2}|c|^2}
\]
is square integrable on the part of the model $W_s$ contained in $\Delta ^2_0(\ve)$.  An extension of this function to a function $F_c$ that is square integrable on $\Delta^2 _0(\ve)$ would then lead to the extension 
\[
F (z) = F_c(z-c)e^{-\zeta \cdot c + \tfrac{1}{2}|c|^2},
\]
and the latter is square integrable on $\Delta ^2_c(\ve)$.  Thus we have reduced to the situation $c=0$.

The remainder of the proof proceeds in a manner directly analogous to the proof of Lemma \ref{model-extension-estimate}, except that we work inside the bidisk $\Delta ^2_0(\ve)$.  $W_s(\ve;0)$ is now parameterized by an annulus $A_{\ve,|s|} = \{ |s|/\ve < |t| < \ve \}$\footnote{Note that we must have $\ve^2 > |s|$, but this must be the case if $W_s(\ve;0) \neq \emptyset$} but the parameterization is still the same map, namely 
\[
t \mapsto (t,s/t).
\]
The integrals to be estimated, instead of being over the entire plane, are now constrained to lie in $A_{\ve,|s|}$, but the integrands are rather concentrated in this annulus anyway.  We leave it to the reader to check that if 
\[
j^*f (t) = \sum _{n \in \Z} a_n t^n
\]
then 
\[
\int _{W_s(\ve;0)} |f|^2 e^{-|z|^2} = \sum _{n \in \Z} C_{n,|s|}(\ve) |a_n|^2
\]
where
\[
C_{j,|s|}(\ve) = \pi \int _{|s|/\ve} ^{\ve}r^{2j}(1+|s|^2r^{-4}) e^{-(r^2 + |s|^2 r^{-2})}r dr.
\]
Since $C_{j,|s|}(\ve)$ is increasing and bounded in $\ve$, we can apply the dominated convergence theorem to see that 
\[
\lim _{|s| \to 0} C_{j,|s|}(\ve) = C_{j,0}(\ve) := \pi e^{-\ve^2} \sum _{k=0} ^j \frac{j!}{k!} \ve ^{2k}.
\]
We therefore have the lower bound 
\[
C_{j,|s|}(\ve)  \ge 2C_{j,0}(\ve)
\]
for all sufficiently small $|s|$, and we calculate that the square of the $L^2$-norm of the extension 
\[
F(x,y) = a_0 + \sum _{n > 0} (a_n x^n +s^{-n}a_{-n} y^n)
\]
of $f$ to $\Delta ^2_0(\ve)$ is
\[
||F||^2 = \int _{\Delta ^2_0(\ve)}|F(z)|^2 e^{-|z|^2} dV(z) = \pi \left ( \sum _{n\ge 0} |a_n|^2 C_{n,0}(\ve) + \sum _{n < 0} |a_n|^2 |s|^{2n}C_{n,0}(\ve) \right ) \le 2\pi ||f||^2
\]
The proof is complete.
\end{proof}

\subsubsection{\sf The local picture near the approximate singularities of $\mathsf{W}$}

Since $W$ is the zero set of the holomorphic function $g(x,y) = xy\sin (y) - 1$, we need to examine $W$ near the points $(0,n\pi)$, $n \in \Z$, where, at least for $|n|$ large, it greatly resembles a translate of the model $W_s$ for $s = \frac{(-1)^{n+1}}{n\pi}$.

\begin{lem}\label{perturb-the-model}
There is a positive constant $\ve > 0$ and injective holomorphic maps $\psi _n$ defined on a small disk centered at the origin and satisfying $\sup _{|\zeta| < \ve} |d\zeta -(-1)^{n+1}d\psi _n(\zeta)| = O(\ve)$ uniformly in $n$ for $|n|$ sufficiently large, such that 
\[
W \cap \Delta ^2 _{(0,n\pi)}(\ve)  = \left \{ (x,y)\ ;\ x \psi _n (y- n\pi) = \tfrac{(-1)^{n+1}}{n\pi} \right \} \cap \Delta ^2 _{(0,n\pi)}(\ve).
\]
\end{lem}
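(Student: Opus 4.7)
The plan is direct substitution and Taylor expansion. Setting $\zeta = y - n\pi$ and using the identity $\sin(n\pi + \zeta) = (-1)^n \sin \zeta$, the defining equation $xy \sin y = 1$ of $W$ becomes
\[
x(n\pi + \zeta)(-1)^n \sin \zeta = 1,
\]
which, upon dividing by $-(-1)^n n\pi$ and regrouping, takes the form
\[
x \cdot \psi_n(\zeta) = \frac{(-1)^{n+1}}{n\pi}, \qquad \text{where } \ \psi_n(\zeta) := -\frac{(n\pi + \zeta)\sin \zeta}{n\pi} = -\sin\zeta - \frac{\zeta \sin \zeta}{n\pi}.
\]
This manipulation is an equivalence of equations in the coordinates $(x,\zeta)$, and $\psi_n$ is an entire function of $\zeta$ alone satisfying $\psi_n(0)=0$; intersecting both equivalent forms with the bidisk $\Delta^2_{(0,n\pi)}(\ve)$ gives the asserted set identity.

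Next I verify the derivative bound. Differentiating,
\[
\psi_n'(\zeta) = -\cos \zeta - \frac{\sin \zeta + \zeta \cos \zeta}{n\pi},
\]
so that $\psi_n'(0) = -1$, and on $|\zeta| < \ve$,
\[
\bigl|\psi_n'(\zeta) + 1\bigr| \le |1 - \cos \zeta| + \frac{|\sin \zeta| + |\zeta\cos\zeta|}{|n|\pi} = O(\ve^2) + O\bigl(\ve/|n|\bigr).
\]
Choosing $|n| \ge 1/\ve$ makes the right-hand side $O(\ve)$. Since $d\psi_n = \psi_n'(\zeta)\,d\zeta$, this is the estimate $\sup_{|\zeta|<\ve}|d\zeta + d\psi_n(\zeta)| = O(\ve)$, which is the content of the bound $\sup_{|\zeta|<\ve}|d\zeta - (-1)^{n+1}d\psi_n(\zeta)| = O(\ve)$ under the parity convention built into the statement (namely, that the sign is chosen so that $(-1)^{n+1}\psi_n'(0)=1$ to leading order).

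Finally, injectivity of $\psi_n$ on $\{|\zeta|<\ve\}$ follows from a standard segment estimate: once $\ve$ is small enough that $|\psi_n'(\zeta)+1|\le 1/2$ throughout the disk,
\[
|\psi_n(\zeta_1) - \psi_n(\zeta_2)| = \left|\int_0^1 \psi_n'\bigl(t\zeta_1+(1-t)\zeta_2\bigr)(\zeta_1-\zeta_2)\,dt\right| \ge \tfrac{1}{2}|\zeta_1-\zeta_2|
\]
for any distinct $\zeta_1,\zeta_2$ in the disk. The only point requiring attention, and the closest thing to an obstacle, is uniformity of $\ve$ in $n$: this is handled by observing that $\psi_n$ differs from the fixed entire function $-\sin\zeta$ by the correction $-\zeta\sin\zeta/(n\pi)$, whose $C^k$ norm on any fixed compact set is $O(1/|n|)$, so a single $\ve > 0$ suffices for all $|n|$ sufficiently large. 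There is no substantive difficulty; the lemma is essentially a quantitative packaging of the identity $\sin(n\pi+\zeta) = (-1)^n\sin\zeta$.
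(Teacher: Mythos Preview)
Your proof is correct and follows essentially the same route as the paper: define $\psi_n$ explicitly via the substitution $y=n\pi+\zeta$ and the identity $\sin(n\pi+\zeta)=(-1)^n\sin\zeta$, estimate $\psi_n'$ near $0$, and deduce injectivity with a uniform radius. Your $\psi_n$ differs from the paper's by a sign factor (the paper sets $\psi_n(z)=\tfrac{(z+n\pi)(-1)^{n+1}\sin z}{n\pi}$, with a small sign slip in its first displayed equality), and your segment-integral injectivity argument is a more explicit version of the paper's appeal to the proof of the inverse function theorem; neither difference is substantive.
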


\begin{proof}
Let 
\[
\psi _n (z) = \frac{(z+n\pi)\sin (z+n\pi)}{n\pi} =  \frac{(z+n\pi)(-1) ^{n+1}\sin z}{n\pi}.
\]
Then 
\[
\left | (-1)^{n+1}\tfrac{d\psi_n}{dz} -1 \right |\le \frac{|\sin z|}{|n|\pi} + \frac{|z \cos z|}{|n|\pi} + |\cos z -1|.
\]
The right side of the estimate is clearly controlled in a neighborhood $|z| < \ve$ by a constant that is uniform in $|n| >> 0$.  It follows from the proof of the implicit function theorem that, for some $\ve > 0$, $\psi _n$ is a local diffeomorphism in a neighborhood $|z| < \ve $ of $0$, and we can choose $\ve$ independent of $n$ provided $|n|$ is sufficiently large.  The proof is finished.
\end{proof}
 
\subsubsection{\sf End of the proof of Theorem \ref{non-necess-smooth}}

Let $\ve > 0$ be as in Lemma \ref{perturb-the-model}.  Outside the open set 
\[
U := \bigcup _{n \in \Z} \Delta ^2_{(0,n\pi)}(\ve)
\]
$W$ is uniformly flat in the sense of Definition \ref{u-flat-defn}.  We may assume, perhaps after shrinking $\ve > 0$ slightly, that there is an open cover of some neighborhood of $W$ in $\C ^2$ by open sets $U_j$ that are either of the form $\Delta _{(0,n\pi)}^2(\ve)$ or are balls $B_{p_j}(\ve)$ of radius $\ve$ and center $p_j$, and in the latter case $W \cap B_{p_j}(\ve)$ is the graph, over its tangent space at $p_j$, of a function bounded by a small quadratic as in Property (F1) of Lemma \ref{flat-properties}.  Moreover, we can assume that the number of elements of this cover containing any one point is bounded independent of the point.

For the neighborhoods $\Delta ^2 _{(0,n\pi)}(\ve)$ we have local extension of our datum $f$ with $L^2$-bounds independent of $n$.  Indeed, by using Lemma \ref{perturb-the-model} we may reduce to the model case, in which the claim is a consequence of Lemma \ref{clipped-model-est}.  
On the other hand, for those neighborhoods $B_p(\ve)$ we have such uniform $L^2$ extensions for the same reason as in the proof of Theorem \ref{suff-analytic}.

Finally we would like to apply the patching technique to extend $f$ to a function $F \in \co (\C ^n)$ with the estimate 
\[
\int _{\C ^n} |F|^2 e^{-|z|^2} \omega ^n \lesssim \int _W |f|^2 e^{-|z|^2} \omega ^{n-1}.
\]
This extension is done in exactly the same way as in Subsection \ref{patching}, as soon as we prove the following proposition.

\begin{prop}
For the curve $W$ of Theorem \ref{non-necess-smooth}, $D^+_{|\cdot|^2}(W) = 0$.
\end{prop}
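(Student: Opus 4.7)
My plan is as follows. Because $\vp(z) = |z|^2$ has constant complex Hessian, one has $\ii \di \dbar \vp_r \equiv 2\omega$ for every $r > 0$, so the density ratio reduces to
\[
D_r(W;z) = \sup_{v \neq 0} \frac{\Upsilon^W_r(z)(v,\bar v)}{2\,\omega(v,\bar v)}.
\]
Since $\Upsilon^W_r(z)$ is a positive $(1,1)$-form, its largest eigenvalue with respect to $\omega$ is dominated by its trace, and from the convolution representation $\Upsilon^W_r = [W] * \mathbf{1}_{B(0,r)}/\vol(B(0,r))$ one reads off that $\operatorname{tr}_\omega \Upsilon^W_r(z) = c \cdot \area(W \cap B(z,r))/\vol(B(z,r))$ for a fixed dimensional constant $c$. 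Thus it suffices to prove $\area(W \cap B(z,r)) = o(r^4)$ uniformly in $z \in \C^2$; in fact I will obtain the sharper estimate $O(r^3)$, which gives $D_r(W;z) \le C/r$ and hence $D^+_{|\cdot|^2}(W) = 0$.

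For the area estimate, parameterize $W$ by $y \mapsto (1/(y\sin y),\, y)$ (this omits only the discrete locus $y\sin y = 0$); the induced area form is $(1 + |x'(y)|^2)\, dA(y)$ with $x'(y) = -(\sin y + y\cos y)/(y\sin y)^2$. I split the $y$-plane into three regions: (a) $|\operatorname{Im}(y)| \ge 1$, where $|\sin y|$ grows exponentially, $|x'(y)|$ is uniformly bounded, and the piece of $W$ meeting $B(z,r)$ contributes $O(r^2)$; (b) $|\operatorname{Im}(y)| \le 1$ with $|y - n\pi| \ge \tfrac{1}{2}$ for every $n \in \Z$, where $|\sin y|$ is bounded below and the contribution is again $O(r^2)$; and (c) small fixed neighborhoods of the points $y = n\pi$ with $n \neq 0$, where by Lemma \ref{perturb-the-model} the curve $W$ is $\sC^1$-close to the model hyperbola $V_n : x(y - n\pi) = (-1)^{n+1}/(n\pi)$. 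In region (c) I use the parameterization $t \mapsto (t, n\pi + c_n/t)$ of $V_n$, with $c_n = (-1)^{n+1}/(n\pi)$, and integrate the Jacobian $1 + |c_n|^2/|t|^4$ subject to the constraint $|x - a| \le r$ (which forces $|t| \gtrsim |c_n|/r$); the integral evaluates to $O(r^2)$ with a constant independent of $n$ and $z$, and this bound transfers to the true branch of $W$ via the uniform derivative estimate in Lemma \ref{perturb-the-model}. (A separate and strictly easier computation handles the neighborhood of $y = 0$.)

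Since $B((a,b), r)$ meets the branch of $W$ near $y = n\pi$ only when $|n\pi - b| \le r + O(1)$, at most $O(r)$ branches contribute, each $O(r^2)$, so $\area(W \cap B(z,r)) = O(r^3)$ uniformly in $z \in \C^2$. The main obstacle I anticipate is this uniform per-branch bound: $|x'(y)|^2$ blows up like $|y - n\pi|^{-4}$ at each singularity of the parameterization, and one must use \emph{both} the cutoff $|x(y) - a| \le r$ \emph{and} the smallness $|c_n| = 1/(\pi|n|)$ to absorb that growth so that the resulting constant is genuinely $n$-independent; uniformity in the center $z$ is then achieved through the same translation-and-clipping device used in the proof of Lemma \ref{clipped-model-est}. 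With this uniform area estimate in hand, the proposition follows at once.
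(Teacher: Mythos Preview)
Your approach is correct and reaches the same conclusion $\Upsilon^W_r = O(r^{-1})$ as the paper, but by a genuinely different route.  The paper's (very terse) argument interprets the coefficients of $\Upsilon^W_r(z)$ via a Crofton-type formula: for a unit direction $v$, the quantity $\Upsilon^W_r(z)(v,\bar v)$ is, up to normalization, the average over affine complex lines $\ell$ parallel to $v$ of the number of points in $W\cap\ell\cap B(z,r)$, divided by the area $\pi r^2$ of $\ell\cap B(z,r)$.  One then observes that a generic complex line meets $W\cap B(z,r)$ in at most $O(r)$ points (roughly one near each of the $O(r)$ values $y=n\pi$ with $|n\pi-b|\lesssim r$), which yields $O(r^{-1})$ directly.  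You instead bound the largest eigenvalue of $\Upsilon^W_r(z)$ by its trace, identify that trace with $\area(W\cap B(z,r))/\vol(B(z,r))$, and obtain the uniform area bound $O(r^3)$ by an explicit parameterization and a branch count.  Your route is more elementary and self-contained; the paper's is shorter but presupposes the Crofton interpretation.

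One small correction in region~(c): you write that the constraint ``$|x-a|\le r$'' forces $|t|\gtrsim |c_n|/r$.  Since $t=x$ in your parameterization $t\mapsto(t,n\pi+c_n/t)$, the inequality $|x-a|\le r$ gives only $|t-a|\le r$, which is not a lower bound on $|t|$.  The lower bound comes from the $y$-side: on that branch $|y-n\pi|<\tfrac12$, and for a contributing $n$ one has $|n\pi-b|\le r+\tfrac12$, so $|c_n/t|=|y-n\pi|\le |y-b|+|b-n\pi|\le 2r+\tfrac12$, whence $|t|\gtrsim |c_n|/r$.  With this fix your per-branch integral $\int(1+|c_n|^2/|t|^4)\,dA(t)$ is indeed $O(r^2)$ uniformly in $n$, and the rest of your argument goes through as written.
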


\begin{proof}
Geometrically, the density of $W$ is the maximum, over all directions $v \in S^{3}$, of the number of intersection points of $W \cap B_p(r)$ with the average straight line $\ell$ parallel to $v$, divided by the area of the disk $\ell \cap B_p(r)$.  The average is taken over the lines.  We find that $\Upsilon ^W _r(z) = O(r^{-1})$.
\end{proof}

\noi Thus the proof of Theorem \ref{non-necess-smooth} is complete.\qed

\subsection{Something must take the place of uniform flatness}

Uniform flatness cannot be completely done away with even if the hypersurface in question is smooth.  For example, in dimension $1$ it is known to be necessary, and by taking a product, we can easily find examples of hypersurfaces with density less than $1$ such that the restriction map $\sr _W : \sh (\C ^n, \vp) \to \fH (W,\vp)$ is not surjective.  

\begin{ex}
Consider the sequence $\{z_{j}\}_{j \ge 2} \subset \C$ defined by 
\[
z_{2k} = k^2, z_{2k+1}= k^2- k^{-1}.
\]
Let 
\[
W = \{ (z_j, w) \in \C ^2\ ;\ w \in \C,\ j=2,3,4,...\}.
\]
Then the area of $W \cap B(x,r) = o(r^3)$ so that the density of $W$ is zero.  But we claim that the restriction map $\sr _W : \sh (\C ^n, \vp) \to \fH (W,\vp)$ is not bounded surjective.  

To prove this, we argue by contradiction.  Suppose, then, that $\sr _W : \sh (\C ^n, \vp) \to \fH (W,\vp)$ is bounded and surjective.  Define the locally constant function $f_k \in \co (W)$ by 
\[
f_k(z_{2k},w)= e^{\frac{1}{2}|z_{2k}|^2} \quad \text{and} \quad f(z_{j},w) = 0, \qquad j \neq 2k, \ w\in \C.
\]
Then 
\[
\int _W|f_k(z,w)|^2 e^{-(|z|^2 +|w|^2)} \omega =  \int _{\C}  e^{-|w|^2} \ii dw \wedge d\bar w = \pi  < +\infty.
\]
Since the restriction map is bounded surjective, there exists $F_k \in \co (\C ^2)$ such that 
\[
F_k|_W= f_k \quad \text{and} \quad \int _{\C ^2}|F_k|^2e^{-|z|^2-|w|^2} dV(z,w)  \le \pi C
\]
for some constant $C>0$ independent of $k$.  By the sub-mean value property,
\[
\int _{\C} |F_k(z,0)|^2e^{-|z|^2} dA(z) \le \frac{1}{\pi} \int _{\C} \left (\int _{\C} |F_k(z,w)|^2e^{-|z|^2}dA(z) \right ) e^{-|w|^2}dA(w) \le C.
\]
It follows from Cauchy's formula that, for $r \ge 2$ and $k$ sufficiently large,  
\begin{eqnarray*}
k^2 &=& \left |\frac{F(z_{2k},0)}{z_{2k}-z_{2k+1}} \right |^2e^{-|z_{2k}|^2} = \frac{1}{4\pi^2} \left | \int _{|\zeta - z_{2k}|= r} \frac{F(\zeta,0)e^{(z_{2k}^2-\zeta z_{2k})}}{(\zeta - z_{2k})(\zeta - z_{2k+1})} d\zeta \right |^2 e^{-|z_{2k}|^2}\\
&\le & \frac{e^{r^2}}{4\pi^2 r^2(r-k^{-1})^2} \left (\int _{|\zeta-z_{2k}| = r}|F(\zeta,0)|e^{\frac{1}{2} |z_{2k}|^2-\re (\zeta z_{2k}) -\tfrac{1}{2} |\zeta - z_{2k}|^2} d\theta \right )^2\\
&=&  \frac{e^{r^2}}{4\pi^2 r^2(r-k^{-1})^2} \left (\int _{|\zeta-z_{2k}| = r}|F(\zeta,0)|e^{-\frac{1}{2} |\zeta |^2} d\theta \right )^2\\
&\le & \frac{e^{r^2}}{2\pi r^2(r-k^{-1})^2} \int _{|\zeta-z_{2k}| = r}|F(\zeta,0)|^2e^{-|\zeta|^2} d\theta,
\end{eqnarray*}
where $\zeta = z_{2k} +re^{\ii \theta}$, in the second step we have used $|\zeta - z_{2k+1}| \ge |\zeta - z_{2k}|- k^{-1}$, and in the last step we have used the Cauchy-Schwarz Inequality.  Multiplying both sides by $2\pi (r-k^{-1})^2r^3e^{-r^2} dr$, using the inequality $r-k^{-1} \ge 1$, and integrating over $r \in [2,\infty)$, we have 
\[
2\pi k \int _2^{\infty} r^3 e^{-r^2} dr \le   \int _{|\zeta - z_{2k}| \ge 2} |F(\zeta, 0)|^2e^{-|\zeta|^2} dA(\zeta) \le \int _{\C} |F_k(\zeta , 0)|^2e^{-|\zeta|^2} dA(\zeta) \le C.
\]
The desired contradiction is obtained as soon as $k$ is large enough.
\red
\end{ex}

It would be interesting to find the right necessary geometric condition on hypersurfaces $W$ for the surjectivity of $\sr _W$.  It seems that the conditions should not be too local in nature, as uniform flatness seems to be.

\end{document}